\theoremstyle{definition}
\newtheorem{defi}{Definition}[section]
\theoremstyle{plain}
\newtheorem{prop}[defi]{Proposition}
\newtheorem{lemma}[defi]{Lemma}
\newtheorem{corr}[defi]{Corollary}
\newtheorem{thm}[defi]{Theorem}
\theoremstyle{remark}
\newtheorem{rem}[defi]{Remark}
\newtheorem{ex}[defi]{Example}
\def\blfootnote{\gdef\@thefnmark{}\@footnotetext}
\newcommand{\C}{\mathbb{C}}
\newcommand{\Q}{\mathbb{Q}}
\newcommand{\R}{\mathbb{R}}
\newcommand{\Z}{\mathbb{Z}}
\newcommand{\DD}{\mathrm{D}}
\newcommand{\RR}{\mathrm{R}}
\newcommand{\cA}{\mathcal{A}}
\newcommand{\cB}{\mathcal{B}}
\newcommand{\cC}{\mathcal{C}}
\newcommand{\cF}{\mathcal{F}}
\newcommand{\cG}{\mathcal{G}}
\newcommand{\cL}{\mathcal{L}}
\newcommand{\cO}{\mathcal{O}}
\newcommand{\cP}{\mathcal{P}}
\newcommand{\cR}{\mathcal{R}}
\newcommand{\cS}{\mathcal{S}}
\newcommand{\Mod}[1]{\mathrm{Mod}(#1)}
\newcommand{\Dbk}[1]{\mathrm{D}^\mathrm{b}(k_{#1})}
\newcommand{\DbK}[1]{\mathrm{D}^\mathrm{b}(K_{#1})}
\newcommand{\DbL}[1]{\mathrm{D}^\mathrm{b}(L_{#1})}
\newcommand{\DbRck}[1]{\mathrm{D}^\mathrm{b}_{\R\text{-}\mathrm{c}}(k_{#1})}
\newcommand{\DbRcK}[1]{\mathrm{D}^\mathrm{b}_{\R\text{-}\mathrm{c}}(K_{#1})}
\newcommand{\DbRcL}[1]{\mathrm{D}^\mathrm{b}_{\R\text{-}\mathrm{c}}(L_{#1})}
\newcommand{\DbCck}[1]{\mathrm{D}^\mathrm{b}_{\C\text{-}\mathrm{c}}(k_{#1})}
\newcommand{\DbCcK}[1]{\mathrm{D}^\mathrm{b}_{\C\text{-}\mathrm{c}}(K_{#1})}
\newcommand{\DbCcL}[1]{\mathrm{D}^\mathrm{b}_{\C\text{-}\mathrm{c}}(L_{#1})}
\newcommand{\Vect}[1]{\mathrm{Vect}_{#1}}
\newcommand{\RHom}{\RR\mathcal{H}om}
\newcommand{\Hom}{\mathcal{H}om}
\newcommand{\sHom}{\mathrm{Hom}}
\newcommand{\Pervk}[1]{\mathrm{Perv}(k_{#1})}
\newcommand{\PervK}[1]{\mathrm{Perv}(K_{#1})}
\newcommand{\PervL}[1]{\mathrm{Perv}(L_{#1})}
\newcommand{\Ff}{\mathsf{F}_f} 
\newcommand{\GDXfk}{\mathcal{GD}_f(X,k)}
\newcommand{\GDXfK}{\mathcal{GD}_f(X,K)}
\newcommand{\GDXfL}{\mathcal{GD}_f(X,L)}
\newcommand{\NCunf}{\Psi^\mathrm{un}_f}
\newcommand{\VCunf}{\Phi^\mathrm{un}_f}
\newcommand{\iso}{\simeq}
\newcommand{\To}{\longrightarrow}
\newcommand{\ToPO}{\overset{+1}{\longrightarrow}}
\newcommand{\id}{\mathrm{id}}
\newcommand{\supp}{\mathop{\mathrm{supp}}}
\newcommand{\msupp}{\mathop{\mathrm{SS}}}
\newcommand{\op}{\mathrm{op}}
\title{An introduction to field extensions\\and Galois descent for sheaves\\ of vector spaces}
\author{Andreas Hohl}
\date{}
\begin{document}
\maketitle
\vspace{-1cm}

\begin{abstract}
	We study extension of scalars for sheaves of vector spaces, assembling results that follow from well-known statements about vector spaces, but also developing some complements. In particular, we formulate Galois descent in this context, and we also discuss the case of derived categories and perverse sheaves. Most of the results are not new, but our aim is to give an accessible introduction to this subject relying only on techniques from basic sheaf theory. Our proofs also illustrate some applications of results about the structure of constructible and perverse sheaves.\blfootnote{The author's research was funded by the Deutsche Forschungsgemeinschaft (DFG, German Research Foundation), Projektnummer 465657531.}
\end{abstract}

\tableofcontents

\section{Introduction}

Given a field extension $L/K$, it is easy to produce an $L$-vector space $V$ out of a $K$-vector space $W$ by ``extending scalars'' (or ``change of rings''), i.e.\ setting $V\vcentcolon= L\otimes_K W$. The case of vector spaces is, of course, the most basic example, but similar constructions are performed in more advanced contexts: For instance, one can ``upgrade'' a scheme over $K$ to a scheme over $L$. While it is certainly interesting to study such an extension functor itself, one might also wonder if it is possible to describe more precisely its essential image and a possible way of reconstructing an object over $K$ from its associated object over $L$ and some extra data. The last question has nice answers mainly in the case where $L/K$ is a Galois extension, and the machinery behind its solution is commonly referred to as \emph{Galois descent}. In the case of vector spaces (where Galois descent is actually a special case of faithfully flat descent for modules), such questions have, for example, been studied in classical literature such as \cite{Winter}, \cite{Jacobson} \cite{Waterhouse}, \cite{Borel}. We would also like to mention the surveys \cite{Conrad} and \cite{Jahnel}.

If $k$ is a field, $k$-vector spaces are nothing but modules over the constant sheaf $k$ on the one-point space, which is the same as the structure sheaf if we consider the one-point space as $\mathrm{Spec}\, k$. They therefore have two natural generalizations: One can think about $\cO_X$-modules on more general varieties or schemes $X$ over $k$, or one can think about modules over the constant sheaf $k_X$ on more general topological spaces $X$. New questions arise in these contexts, since one can, for instance, ask about compatibility of extension of scalars with operations on sheaves (such as direct and inverse images, duality, etc.). The first viewpoint seems to be widely established (see e.g.\ \cite{Jahnel} for an overview and \cite[Tag 0CDQ]{Stacks}).

In this work, we are going to take the second viewpoint and study extension and descent questions in the case of sheaves of vector spaces and related categories. Most of the basic results from the theory of extension of scalars and Galois descent for vector spaces imply (more or less directly) similar results for sheaves of vector spaces, and indeed our main reference will be classical sheaf theory (see, for example, \cite{KS90}). However, the technical subtleties of certain statements are not always obvious, and it is difficult to keep an overview of the conditions that are required for every single statement: Some assertions hold for arbitrary field extensions and sheaves, others require finite or Galois extensions or certain constructibility assumptions on the sheaves involved. Although the main results in this direction are certainly known to experts or follow from more general frameworks, a canonical reference for the details in the concrete setting of sheaves of vector spaces does not seem to exist. These concepts play, however, a crucial role in theories like that of mixed Hodge modules, where extension of scalars is used for perverse sheaves.

The aim of this work is therefore twofold: Firstly, we want to collect and present the main definitions and statements about extension of scalars and Galois descent for sheaves of vector spaces, together with the arguments needed to deduce them from the well-known statements in linear algebra. Great parts are therefore meant to be rather expository and accessible from an elementary perspective. We do not claim originality for these statements, but we hope that this overview will serve as a useful reference.

Secondly, in the course of this detailed presentation, we establish some complementary technical results: We describe compatibilities between the six Grothendieck operations (and in particular the functor $\RHom$) and extension of scalars in the (derived) category of sheaves of vector spaces. Moreover, we investigate Galois descent for complexes in the derived category of sheaves of vector spaces as well as for perverse sheaves. Again, these results are neither new nor surprising, but all these considerations involve in particular some interesting arguments using results on the structure of $\R$-constructible and perverse sheaves. Indeed, discussions following a former version of this article led to the work \cite{HS23}, whose results we now use here. Even for readers already familiar with the basic ideas, this article may be an enlightening illustration of some ``unusual'' functorialities for $\R$-constructible sheaves and gluing techniques for perverse sheaves.

We originally got interested in this subject during the preparation of our joint article with Davide Barco, Marco Hien and Christian Sevenheck \cite{BHHS22}, where we studied certain differential equations from a topological viewpoint. The basic idea is the following: A Riemann--Hilbert correspondence is an equivalence between certain categories of differential systems and categories of topological objects (such as local systems and perverse sheaves, for example). These topological objects are a priori defined over the field of complex numbers, so one can ask under which conditions such an object ``descends'' to one over a subfield of $\C$, and it turned out that Galois descent serves as a useful technique there. In our common work, we established some Galois descent results for sheaves (and more general objects), some of which we will reformulate and complement in this work. We will also investigate the case of complexes of sheaves.

Although this was our original motivation for studying Galois descent for sheaves of vector spaces, we will not make any reference to the concrete application in loc.~cit.\ here, nor to the more general framework of enhanced ind-sheaves we were studying there. We rather consider this an independent, self-contained and accessible exposition of the subject, providing more details for a broader readership familiar with Galois and sheaf theory. It is, however, one purpose of this article to give constructions that might be imitated in the more general context of enhanced ind-sheaves later. In particular, the method of proof of Galois descent for perverse sheaves can serve as a model for the case of enhanced perverse sheaves.

Let us note that the classical theory of Galois descent for vector spaces can also be adapted to the case of infinite Galois extensions. We will not discuss this case explicitly in this article, although it is certainly interesting to study it also in the context of sheaves.

\paragraph{Outline} After reviewing some basics of sheaf theory in Section~\ref{sec:Sheaves} (mainly to set notation and terminology), we describe the concepts of $G$-structures, extension of scalars, $K$-structures and Galois descent in a quite abstract categorical framework in Section~\ref{sec:Cat}. On the one hand, this allows us to define these notions for multiple categories at the same time, on the other hand, it might also serve as a useful framework for studying them in different examples later. After each definition, we directly give the explicit description in our categories of interest. We address compatibility questions between extension of scalars and the six Grothendieck operations in Section~\ref{sec:Hom}. For some of them, $\R$-constructibility will be required, while others hold without such an assumption.

In Section~\ref{sec:GaloisDescent}, we first describe explicitly Galois descent for sheaves and formulate it as an equivalence of categories, using in particular the results of the previous section to obtain full faithfulness. We then briefly discuss descent in derived categories of sheaves of vector spaces. We do not obtain an equivalence as in the abelian case, but we will give some background and explanations on the problems that arise. Finally, the last subsection is devoted to the study of Galois descent for perverse sheaves. We use a construction of A.\ Beilinson \cite{Bei} -- which we will briefly recall and study in the context of extension of scalars -- to realize perverse sheaves by ``gluing data'' and inductively reduce to the case of sheaves proved before.

\paragraph{Acknowledgements} I would like to thank Claude Sabbah for invaluable discussions, which in particular helped me to work out Galois descent for perverse sheaves using Beilinson's construction. I am also indebted to Pierre Schapira for his interest and comments that helped me to improve a previous version of this article and led to our common work \cite{HS23} about some interesting functorialities for constructible sheaves, which are now used here. I moreover thank Davide Barco, Marco Hien and Christian Sevenheck who, through our common work, motivated me to work out more details on this subject. Finally, I am particularly grateful to Takuro Mochizuki for answering my questions and providing some ideas during the preparation of \cite{BHHS22}, which helped me to better understand the theory.

\section{A very short review of sheaf theory}\label{sec:Sheaves}
We assume the readers to be familiar with the theory of sheaves of vector spaces. We will recall here some basic facts and notations, and refer to the standard literature such as \cite{KS90} or \cite{Dimca} for details.

Although not strictly necessary in all places, we will assume all our topological spaces to be \emph{good}, i.e.\ Hausdorff, locally compact, second countable and of finite cohomological dimension. (This is particularly important for the construction of the functors $f_!$ and $f^!$.)

\paragraph{Presheaves and sheaves} Let $X$ be a topological space and let $\mathrm{Op}(X)$ be the category of open subsets of $X$, where $\sHom(U,V)$ has one element if $U\subseteq V$ and is empty otherwise. Let $k$ be a field.

A presheaf of $k$-vector spaces on $X$ is a functor $\mathrm{Op}(X)^\op\to \Vect{k}$. It is a sheaf if for every open $U\subseteq X$ and every open covering $U=\bigcup_{i\in I} U_i$ the natural sequence
$$0\to \cF(U)\to \prod_{i\in I} \cF(U_i) \rightrightarrows \prod_{i,j\in I} \cF(U_i\cap U_j)$$
is exact. If $\cP$ is a presheaf, its sheafification will be denoted by $\cP^\#$. The sheafification functor is left adjoint to the natural inclusion of sheaves into presheaves.

We denote the Grothendieck abelian category of sheaves of $k$-vector spaces on $X$ by $\Mod{k_X}$, and its bounded derived category by $\Dbk{X}$. There are the six Grothendieck operations $\RR f_*$, $\RR f_!$, $f^{-1}$, $f^!$, $\otimes$ and $\RHom$ (and the underived versions of all these functors except $f^!$). We will sometimes write $\otimes_{k_X}$ or $\RHom_{k_X}$ if we want to emphasize the field.

\paragraph{(Locally) constant sheaves} If $V\in\Vect{k}$, we denote by $V_X\in\Mod{k_X}$ the constant sheaf with stalk $V$ on $X$. It is the sheafification of the constant presheaf $V^\mathrm{pre}_X$ defined by $V^\mathrm{pre}_X(U)=V$ for all $U\in \mathrm{Op}(X)$. If $p_X\colon X\to\{\mathrm{pt}\}$ is the map to the one-point space, we also have $V_X=p_X^{-1}V$ (note that sheaves of vector spaces on a one-point space are the same as vector spaces).
In particular, we have the constant sheaf $k_X$, and if $f\colon X\to Y$ is a morphism of topological spaces, then $f^{-1}k_Y\iso k_X$.

If $\cF\in\Mod{k_X}$ and $Z\subseteq X$ is a locally closed subset with inclusion $j\colon Z\hookrightarrow X$, we write $\cF_Z\vcentcolon= j_!j^{-1}\cF$. This is the restriction of $\cF$ to $Z$, extended again to $X$ by zero outside of $Z$. We will also sometimes denote by $k_Z\in\Mod{k_X}$ the sheaf $j_!k_Z$ if there is no risk of confusion.

We moreover denote the duality functor by $\DD_X\vcentcolon= \RHom(-,\omega_X)$, where $\omega_X=p_X^! k$ is the dualizing complex. We sometimes write $\DD_X^k$ to emphasize the field.

We call a sheaf $\cF\in\Mod{k_X}$ \emph{locally constant} (or a \emph{local system}) if every point $x\in X$ has an open neighbourhood $U\subseteq X$ such that $\cF|_U$ is isomorphic to a constant sheaf $V_X$ for some $V\in\Vect{k}$. It is \emph{locally constant of finite rank} if all the $V$ are finite-dimensional.

\paragraph{Constructibility and perversity} If $X$ is a real analytic manifold, $\cF\in\Mod{k_X}$ is called \emph{$\R$-constructible} if there exists a locally finite covering $X=\bigcup_{\alpha\in A} X_\alpha$ by subanalytic subsets such that $\cF|_{X_\alpha}$ is locally constant of finite rank for every $\alpha$. We denote by $\DbRck{X}$ the full subcategory of $\Dbk{X}$ of complexes with $\R$-constructible cohomologies. We also note that this category is in fact equivalent to the bounded derived category of the category of $\R$-constructible sheaves.

If $X$ is a complex manifold, $\cF\in\Mod{k_X}$ is called \emph{$\C$-constructible} if there exists a locally finite covering $X=\bigcup_{\alpha\in A} X_\alpha$ by $\C$-analytic subsets such that $\cF|_{X_\alpha}$ is locally constant of finite rank for every $\alpha$. We denote by $\DbCck{X}$ the full subcategory of $\Dbk{X}$ of complexes with $\C$-constructible cohomologies.

More intrinsically, for any $\cF\in\Dbk{X}$ ($X$ a differentiable manifold), one can define its microsupport $\msupp(\cF)$, which is a subset of the cotangent bundle $T^*X$. Then, if $X$ is a real analytic (resp.\ complex) manifold, being $\R$-constructible (resp.\ $\C$-constructible) is equivalent to $\msupp(\cF)$ being a closed conic subanalytic (resp.\ $\C$-analytic) Lagrangian subset of $T^*X$. We refer to \cite[Chap.\ V and VIII]{KS90} for details on these notions.

An object $\cF^\bullet\in\DbCck{X}$ is called a \emph{perverse sheaf} if $\dim \supp \mathrm{H}^{-i}(\cF^\bullet)\leq i$ for any $i\in \Z$ (we say that $\cF^\bullet$ satisfies the \emph{support condition}) and $\dim \supp \mathrm{H}^{-i}(\DD_X\cF^\bullet)\leq i$ for any $i\in \Z$ (i.e.\ $\DD_X\cF^\bullet$ also satisfies the support condition). We denote by $\Pervk{X}$ the full subcategory of $\DbCck{X}$ consisting of perverse sheaves. We refer in particular to \cite{BBD} for the theory of perverse sheaves. The category $\Pervk{X}$ is an abelian category. Let us note that, in particular, a perverse sheaf has no nontrivial cohomologies in degrees less than $-\dim_\C X$. (This follows easily from \cite[p.\ 56]{BBD}, for example.)

\paragraph{A remark on operations and sheafification} Before entering the main topic of the article, let us remark the following elementary fact that we are going to use throughout the article:
If $\cP$ is a presheaf and $\cF$ is a sheaf, then the tensor product sheaf $\cP^\#\otimes \cF$ is isomorphic to the sheafification of the (naïve) presheaf tensor product $\cP\overset{\mathrm{pre}}{\otimes}\cF$. This is easy to show using tensor-hom adjunctions for presheaves and sheaves and the universal property of sheafification. It is, however, crucial that $\cF$ is already a sheaf here. In particular, this means that if $L/K$ is a field extension (i.e.\ $L$ is a $K$-module) and $\cF\in\Mod{K_X}$, then the sheaf $L_X\otimes\cF$ is the sheafification of the presheaf $U\mapsto L\otimes_K \cF(U)$.

Let us also note that sheafification does not commute with operations on (pre-)sheaves in general: For example, it is not true in general that $f_*(\cP^\#)\iso (f_*^\mathrm{pre}\cP)^\#$ for a presheaf $\cP$. Indeed, if this were true, we would not have examples as in Remark~\ref{rem:directImage}.

\section{Linear categories and field extensions}\label{sec:Cat}
The general philosophy of Galois descent is the following: Given a Galois extension $L/K$ with Galois group $G$ and an object $F$ over $L$, then the existence of a $G$-structure (i.e.\ a suitable collection of isomorphisms between $F$ and its Galois conjugates, often formulated as a suitable action of the Galois group on $F$) should guarantee the existence of a $K$-structure of $F$, i.e.\ an object over $K$ which is isomorphic to $F$ after extension of scalars. (Even more, a $G$-structure should in some sense determine a particular $K$-structure, since in the case where ``object'' means ``finite-dimensional vector space'', the pure existence of such a structure is not big news.)

We first set up a very general framework for the concepts of $G$- and $K$-structures, motivated by the notions set up in \cite{BHHS22}, but immediately describe these notions explicitly in our categories of interest. Recall that, given a field $k$, a $k$-linear category is a category whose hom spaces are $k$-vector spaces and composition of morphisms is $k$-linear. Note that if $L/K$ is a field extension, any $L$-linear category is automatically also $K$-linear. We will assume any functor between two linear categories to be linear (meaning that the induced map on hom spaces is linear).

\subsection{$G$-structures}

Let $L/K$ be a field extension and denote by $G=\mathrm{Aut}(L/K)$ the group of field automorphisms $g\colon L\to L$ such that $g|_K=\id_K$.

\begin{defi}\label{def:Gconj}
	Let $\cC(L)$ be an $L$-linear category. A \emph{$G$-conjugation} on $\cC(L)$ is a collection of auto-equivalences
	$$\gamma_g=\overline{(\bullet)}^g\colon \cC(L) \overset{\sim}{\To} \cC(L)$$
	(one for each $g\in G$) and natural isomorphisms $$I_{g,h}\colon \gamma_h\circ \gamma_g \overset{\sim}{\To} \gamma_{gh}$$ for any $g,h\in G$ such that for any $g,h,k\in G$ the following diagram is commutative:
	$$\begin{tikzcd}
		& \gamma_{hk}\circ\gamma_g\arrow{dr}{I_{g,hk}} \\
		\gamma_k\circ \gamma_h\circ\gamma_g\arrow{ur}{I_{h,k}\circ \gamma_g} \arrow{dr}[swap]{\gamma_k\circ I_{g,h}} & & \gamma_{ghk}\\
		&\gamma_k\circ\gamma_{gh}\arrow{ur}[swap]{I_{gh,k}}
	\end{tikzcd}$$
\end{defi}
Note that this implies in particular that $\gamma_{\id_L}\iso \id_{\cC(L)}$, where $\id_L\in G$ is the identity element of the $G$. In the categories we use, it will actually be \emph{equal} to the identity functor. More precisely, all the $I_{g,h}$ will be equalities rather than isomorphisms.

\begin{ex}\label{ex:GConj}
	Here are the categories we are interested in in this article: The classical example is that of vector spaces, and it easily generalizes to presheaves and sheaves.
	\begin{itemize}
		\item[(a)] Let $\Vect{L}$ be the category of $L$-vector spaces. Then, for an object $V\in\Vect{L}$ and an element $g\in G$, the $L$-vector space $\overline{V}^g$ is defined as follows: As $K$-vector spaces (or sets), we set $\overline{V}^g=V$, and the action of $L$ on $\overline{V}^g$ is given by
		$$\ell \cdot v \vcentcolon = g(\ell)v$$
		for $\ell\in L$ and $v\in \overline{V}^g$, where the right-hand side is the given scalar multiplication on $V$.
		
		Given a morphism $V\to W$ in $\Vect{L}$, then for any $g\in G$ the same set-theoretic map defines an $L$-linear morphism $\overline{f}^g\colon \overline{V}^g\to\overline{W}^g$.
		
		Altogether, this gives a functor
		$$\overline{(\bullet)}^g\colon \Vect{L}\to\Vect{L},$$
		and it is easy to see that the functor $\overline{(\bullet)}^{g^{-1}}$ is a quasi-inverse, hence the above functor is indeed an auto-equivalence.
		
		Moreover, given $g,h\in G$, the identification $\overline{\overline{V}^g}^h = \overline{V}^{gh}$ is immediate by the definition, as is compatibility of these identifications (that is, commutativity of the diagram in the above definition).
		
		\item[(b)] Let $\cC$ be a category and consider the category $\mathrm{Funct}(\cC,\Vect{L})$ of (covariant) functors from $\cC$ to $\Vect{L}$. Then there is a $G$-conjugation on this category given as follows: Let $F\in\mathrm{Funct}(\cC,\Vect{L})$ and $g\in G$, then we define $\overline{F}^g$ by setting
		$$\overline{F}^g(A)\vcentcolon= \overline{F(A)}^g$$
		for any $A\in \cC$. A morphism $A\to B$ in $\cC$ is sent to the morphism $F(A)\to F(B)$, considered as a morphism $\overline{F(A)}^g\to\overline{F(B)}^g$, as remarked in (a). Thus, this clearly defines an element $\overline{F}^g\in\mathrm{Funct}(\cC,\Vect{L})$.
		
		Given a morphism $F\to G$ in $\mathrm{Funct}(\cC,\Vect{L})$, it is equally easy to see that this induces a morphism $\overline{F}^g\to\overline{G}^g$ for any $g\in G$, and hence we obtain an auto-equivalence $\overline{(\bullet)}^g$ of $\mathrm{Funct}(\cC,\Vect{L})$ with the desired compatibilities.
		
		\item[(c)] Consider the category $\Mod{L_X}$ of sheaves of $L$-vector spaces on a topological space $X$. It is a subcategory of $\mathrm{Funct}(\mathrm{Op}(X)^\op,\Vect{L})$, where $\mathrm{Op}(X)$ is the category of open subsets of $X$ (with inclusions as morphisms). Hence, by (b), to a sheaf $\cF\in\Mod{L_X}$ we can a priori associate a presheaf $\overline{\cF}^g\in \mathrm{Funct}(\mathrm{Op}(X)^\op,\Vect{L})$ for any $g\in G$. It is, however, clear that this presheaf is automatically a sheaf: The unique gluing condition required for sheaves is indeed independent of the action of $L$, it can be checked on the level of sheaves of $K$-vector spaces (or even sets), and on this level $\overline{\cF}^g$ and $\cF$ are the same object. We therefore get an auto-equivalence
		$$\overline{(\bullet)}^g\colon \Mod{L_X}\overset{\sim}{\To}\Mod{L_X}$$
		satisfying the required compatibilities. Moreover, since this equivalence is exact, it also induces a functor
		$$\overline{(\bullet)}^g\colon \DbL{X}\overset{\sim}{\To} \DbL{X}$$
		on the level of derived categories, equipping the latter with a $G$-conjugation.
	\end{itemize}
\end{ex}
In the sequel, when we work in one of these categories, we will always use the $G$-conjugations described in Example~\ref{ex:GConj}.

Since we are mainly concerned with sheaves in this note, let us state the main properties of $G$-conjugation for sheaves of vector spaces.
\begin{lemma}[{cf.\ \cite[Lemma 2.1]{BHHS22}}]\label{lemma:compatConj}
	Let $L/K$ be a field extension and $G\vcentcolon=\mathrm{Aut}(L/K)$. Let $g\in G$. Let $f\colon X\to Y$ be a continuous map between topological spaces.
	\begin{itemize}
		\item[(a)] Let $\cF\in\DbL{X}$. Then $\overline{\RR f_* \cF}^g\iso \RR f_*\overline{\cF}^g$ and $\overline{\RR f_! \cF}^g\iso \RR f_!\overline{\cF}^g$.
		\item[(b)] Let $\cG\in\DbL{Y}$. Then $\overline{f^{-1} \cG}^g\iso f^{-1}\overline{\cG}^g$ and $\overline{f^! \cG}^g\iso f^!\overline{\cG}^g$.
		\item[(c)] Let $\cF_1,\cF_2\in\DbL{X}$. Then $\overline{\RHom(\cF_1,\cF_2)}^g\iso \RHom(\overline{\cF_1}^g,\overline{\cF_2}^g)$ and $\overline{\cF_1\otimes\cF_2}^g\iso \overline{\cF_1}^g\otimes \overline{\cF_2}^g$.
		\item[(d)] Let $\cF\in\DbL{X}$. Then $\overline{\DD_X\cF}^g\iso \DD_X\overline{\cF}^g$.
		\item[(e)] If $\cF\in\Mod{L_X}$ is locally constant, so is $\overline{\cF}^g$.
		\item[(f)] If $X$ is a differentiable manifold and $\cF\in\DbL{X}$, then $\msupp(\cF)=\msupp(\overline{\cF}^g)$.
		
		In particular, if $X$ is a real analytic manifold and $\cF\in\DbRcL{X}$ (resp.\ $X$ is a complex manifold and $\cF\in\DbCcL{X}$), then $\overline{\cF}^g\in\DbRcL{X}$ (resp.\ $\overline{\cF}^g\in\DbCcL{X}$).
		\item[(g)] If $X$ is a complex manifold and $\cF\in\PervL{X}$, then $\overline{\cF}^g\in\PervL{X}$.
	\end{itemize}
\end{lemma}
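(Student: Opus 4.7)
The guiding principle throughout is that the forgetful functor $\Mod{L_X}\to\Mod{K_X}$ intertwines $\overline{(\bullet)}^g$ with the identity: the sheaves $\cF$ and $\overline{\cF}^g$ have \emph{literally the same} underlying sheaf of $K$-vector spaces (and in particular the same underlying sheaf of abelian groups), only the $L$-action is precomposed with $g\in G$. The plan is to exploit this observation systematically: for each stated isomorphism, the two sides will agree on underlying $K$-sheaves essentially tautologically, and one only needs to check that the induced $L$-actions match on each side.

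For parts (a), (b) and (c), my strategy is to recall that the six Grothendieck operations are defined by constructions (sheafification of presheaves of sections, stalks, kernels/cokernels, etc.) that commute with the forgetful functor to $\Mod{K_X}$, with the $L$-action on the output inherited functorially from that on the input. Hence for each operation $F$, the underlying $K$-sheaves of $F(\overline{\cF}^g)$ and $\overline{F(\cF)}^g$ are identical, and on both sides multiplication by $\ell\in L$ is the $F$-image of multiplication by $g(\ell)$; so the two $L$-structures coincide. For (d) I would reduce $\overline{\DD_X\cF}^g\iso\DD_X\overline{\cF}^g$ to $\overline{\omega_X}^g\iso\omega_X$ via (b) and (c). Since $\omega_X=p_X^!L$ with $p_X\colon X\to\{\mathrm{pt}\}$, part (b) reduces this to checking $\overline{L}^g\iso L$ in $\Vect{L}$, which is given by the isomorphism $v\mapsto g^{-1}(v)$ (one computes $g^{-1}(\ell\cdot v)=g^{-1}(g(\ell)v)=\ell\, g^{-1}(v)$).

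For (e), I would note that $\overline{V_X}^g\iso (\overline{V}^g)_X$ (the $G$-conjugation commutes with constant-sheaf formation, which is $p_X^{-1}$) and that $\overline{(\bullet)}^g$ commutes with restriction to open subsets by (b); local constancy and finiteness of rank are then preserved. For (f), the microsupport is defined using only the underlying sheaf of abelian groups (cf.\ \cite[Chapter V]{KS90}), so $\msupp(\cF)=\msupp(\overline{\cF}^g)$ is immediate, and the constructibility statements then follow from the microsupport characterization. For (g), since $\overline{(\bullet)}^g$ is exact, $\mathrm{H}^{-i}(\overline{\cF}^g)\iso\overline{\mathrm{H}^{-i}(\cF)}^g$, and the support of a sheaf is independent of its $L$-structure; hence the support condition is preserved, while the condition on the dual follows from (d). Combined with (f), this yields $\overline{\cF}^g\in\PervL{X}$.

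I expect the only real subtlety to be the careful bookkeeping of $L$-actions in (a)--(c), where one must verify that the natural isomorphisms produced by the sheaf-theoretic constructions are not merely isomorphisms of $K$-sheaves but are in fact $L$-linear after twisting by $g$. Everything else is either formal from these cases or a direct consequence of the fact that $\overline{(\bullet)}^g$ is invisible after forgetting the $L$-structure.
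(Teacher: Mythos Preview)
Your proposal is correct and rests on the same core observation as the paper: $\overline{(\bullet)}^g$ is invisible on the underlying $K$-sheaf, so all constructions defined independently of the $L$-action commute with it. For (d)--(g) your arguments are essentially identical to the paper's (the paper also reduces (d) to $\overline{L}^g\iso L$ via (b) and (c), proves (e) via $p_U^{-1}$, deduces (f) from the fact that the microsupport is defined through the functors of (a)--(b), and obtains (g) from exactness plus (d)).

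The only notable difference is in the organisation of (a)--(c). You treat all six operations uniformly via the principle ``the construction commutes with the forgetful functor, and the $L$-action is inherited functorially''. The paper instead verifies $f_*$, $f_!$ and $\Hom$ by hand on sections at the underived level, passes to the derived category (noting that conjugation is exact and preserves injectives), and then obtains $f^{-1}$, $f^!$ and $\otimes$ by adjunction, using the bijection $\sHom(\overline{\cF_1}^g,\cF_2)\iso\sHom(\cF_1,\overline{\cF_2}^{g^{-1}})$. The paper's route has the advantage that $f^!$, which has no elementary underived description, is handled cleanly by adjunction without appealing to any explicit construction; your uniform argument is more conceptual but would need a word on why $f^!$ (defined only as a right adjoint) indeed commutes with the forgetful functor---which in the end amounts to the same adjunction argument the paper gives.
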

\begin{proof}
	This proof of (a)--(c) is taken from \cite[Lemma 2.1]{BHHS22}.
	
	By the definitions of $G$-conjugation on sheaves and the direct image functor, we have $(\overline{f_*\cF}^g)(U)=\overline{(f_*\cF)(U)}^g=\overline{\cF(f^{-1}(U))}^g=\overline{\cF}^g(f^{-1}(U))=f_*\overline{\cF}^g(U)$, which shows $\overline{f_*\cF}^g\iso f_*\overline{\cF}^g$. Moreover, we get $\overline{f_!\cF}^g\iso f_!\overline{\cF}^g$, since the proper direct image sheaf is a subsheaf of the direct image sheaf, containing the sections on $U$ whose support is proper over $U$, and the notion of proper support does not depend on the action of $L$, so it is the same subsheaf in both cases.
	
	For $L$-vector spaces $V$ and $W$, it is clear that we have an isomorphism of $L$-vector spaces
	$$\overline{\sHom_L(V,W)}^g \iso \sHom_L(\overline{V}^g,\overline{W}^g)$$
	(sending a morphism on the left to the same set-theoretic map on the right).
	Then for any open $U\subseteq X$, we have
	$$\Hom(\cF_1,\cF_2)(U)=\sHom(\cF_1|_U,\cF_2|_U)\subset \prod_{V\subset U \text{ open}} \sHom(\cF_1(V),\cF_2(V))$$
	(namely the subset of families of morphisms compatible with restriction maps of $\cF_1$ and $\cF_2$).
	This implies $\overline{\Hom(\cF_1,\cF_2)}^g\iso \Hom(\overline{\cF_1}^g,\overline{\cF_2}^g)$ since conjugation is an equivalence and hence commutes with products.
	
	Finally, (a) and the first part of (c) follow by deriving functors (noting that conjugation is exact and preserves injectives), and the other statements follow by adjunction, using the bijection (of sets)
	$$\sHom_{\DbL{X}}(\overline{\cF_1}^g,\cF_2)\iso \sHom_{\DbL{X}}(\cF_1,\overline{\cF_2}^{g^{-1}}).$$
	
	For (d), note that
	\begin{align*}
		\DD_X\overline{\cF}^g &= \RHom(\overline{\cF}^g,\omega_X) \iso \RHom(\overline{\cF}^g,p_X^!L)\\
		&\iso \RHom(\overline{\cF}^g,p_X^!\overline{L}^g)\iso \overline{\RHom(\cF,p_X^!L)}^g \iso \overline{\DD_X\cF}^g.
	\end{align*}
	Here, we have used an isomorphism of $L$-vector spaces $L\iso \overline{L}^g$, which is given by $\ell\mapsto g(\ell)$, as well as (b) and (c).
	
	We now prove (e). Let $U\subseteq X$ be open such that $\cF|_U$ is a constant sheaf, i.e.\ $\cF|_U\iso V_U\iso p_U^{-1}V$ for some $L$-vector space $V$ (where $p_U\colon U\to \{\mathrm{pt}\}$ denotes the map to the one-point space). Then clearly $\overline{\cF}^g|_U\iso p_U^{-1}\overline{V}^g$	is a constant sheaf (with stalk $\overline{V}^g$) by (b).
	
	The invariance of the microsupport (f) follows using the fact that $\RR \Gamma_Z\iso \RR{i_Z}_*i_Z^!$ for a locally closed inclusion $i_Z\colon Z\hookrightarrow X$ and noting that the definition of the microsupport (see \cite[Definition~5.1.2]{KS90}) hence only uses the functors from (a) and (b). Since $\R$- and $\C$-constructibility can be defined by a condition on the microsupport only (see \cite[Theorems~8.4.2 and 8.5.5]{KS90}), these conditions are preserved under conjugation.
	
	Finally, to show (g), note first that, by exactness of conjugation and (b), the support of the cohomologies of an object $\cF\in\DbCcL{X}$ does not change under conjugation. Therefore, if $\cF$ satisfies the support condition, so does $\overline{\cF}^g$. Moreover, by (d), the support condition for $\DD_X\cF$ implies that for $\DD_X\overline{\cF}^g$. Consequently, $\overline{\cF}^g$ is perverse if $\cF$ is.
	
\end{proof}

\begin{defi}\label{def:Gstructure}
	Let $\cC(L)$ be an $L$-linear category with $G$-conjugation. Let $F\in \cC(L)$ be an object. A $G$-structure on $F$ is given by a family $(\varphi_g)_{g\in G}$ of isomorphisms $\varphi_g\colon F\overset{\sim}{\To} \overline{F}^g$ such that for any $g,h\in G$ the following diagram is commutative:
	$$\begin{tikzcd}
		F \arrow[bend left]{rrrrdd}{\varphi_{gh}} \arrow{dd}{\varphi_h}\\ \\
		\overline{F}^h\arrow{rr}{\gamma_h(\varphi_g)} && \overline{\overline{F}^g}^h\arrow{rr}{I_{g,h}} && \overline{F}^{gh}
	\end{tikzcd}$$
\end{defi}

\begin{ex}\label{ex:Gstructures}
	Consider again the category $\Vect{L}$ as in Example~\ref{ex:GConj} (a).
	The trivial one-dimensional vector space $L\in \Vect{L}$ has a natural $G$-structure: Any element $g\in G$ defines an isomorphism of $L$-vector spaces $\varphi_g=g\colon L\overset{\sim}{\To} \overline{L}^g$ given by $\ell\mapsto g(\ell)$ and satisfying the compatibilities from Definition~\ref{def:Gstructure}. (We will often just denote it by the letter $g$.) The same works, of course, for any $n$-dimensional vector space of the form $L^n$ for some $n\in\Z_{>0}$, applying $g$ componentwise.
	
	Similarly, in the category $\Mod{L_X}$ of sheaves of $L$-vector spaces on a topological space, one gets a canonical $G$-structure on the constant sheaf $L_X$. Indeed, the constant sheaf $L_X$ is the sheafification of the constant presheaf $L^\mathrm{pre}_X$ defined by $L^\mathrm{pre}_X(U)=L$ for any open $U\subseteq X$. On $L^\mathrm{pre}_X$, we can define the $G$-structure just as in the example of the trivial vector space $L$ above. Then we conclude via Lemma~\ref{lemma:GstrSheafification} below.
\end{ex}

\begin{lemma}\label{lemma:GstrSheafification}
	Assume that $\cP\in\mathrm{Funct}(\mathrm{Op}(X)^\op,\Vect{L})$ is a presheaf with a $G$-structure $(\phi_g)_{g\in G}$. Then its sheafification $\cF \vcentcolon= \cP^\#$ has an induced $G$-structure $(\varphi_g)_{g\in G}$.
\end{lemma}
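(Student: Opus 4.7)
The plan is to exploit the fact that the $G$-conjugation $\overline{(\bullet)}^g$ affects only the $L$-action on sections without changing the underlying presheaf of sets (or $K$-vector spaces), and therefore commutes with sheafification in a canonical way. Once we have a natural isomorphism $(\overline{\cP}^g)^\# \iso \overline{\cP^\#}^g$, we can transport the $\phi_g$ to $\varphi_g$ and read off the cocycle condition.

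More concretely, first I would establish the following compatibility: for any presheaf $\cP \in \mathrm{Funct}(\mathrm{Op}(X)^\op,\Vect{L})$ and any $g\in G$, there is a natural isomorphism
$$\eta_{\cP,g}\colon (\overline{\cP}^g)^\# \overset{\sim}{\To} \overline{\cP^\#}^g.$$
The cleanest way is the adjunction argument: as noted in Example~\ref{ex:GConj}(c), the conjugation functor on sheaves is the restriction of the conjugation functor on presheaves along the inclusion $\iota\colon \mathrm{Sh}\hookrightarrow\mathrm{PSh}$, i.e.\ $\iota\circ\overline{(\bullet)}^g \iso \overline{(\bullet)}^g\circ \iota$. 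Passing to the left adjoints $(\bullet)^\#$ of $\iota$ on both sides (and using that $\overline{(\bullet)}^g$ is an auto-equivalence, hence its own inverse's adjoint up to $\overline{(\bullet)}^{g^{-1}}$), we obtain the required isomorphism $\eta_{\cP,g}$. Alternatively, one can verify this directly on stalks, since the stalk at a point is a filtered colimit computed on the underlying $K$-vector spaces, and the $L$-action is just twisted by $g$ afterwards.

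Second, I would define $\varphi_g\colon \cF\to\overline{\cF}^g$ as the composition
$$\cF = \cP^\# \xrightarrow{\phi_g^\#} (\overline{\cP}^g)^\# \xrightarrow{\eta_{\cP,g}} \overline{\cP^\#}^g = \overline{\cF}^g,$$
which is an isomorphism because both factors are. Verifying the cocycle condition from Definition~\ref{def:Gstructure} for $(\varphi_g)_{g\in G}$ then amounts to applying $(\bullet)^\#$ to the analogous cocycle condition for $(\phi_g)_{g\in G}$ and checking that the natural isomorphisms $\eta_{\cP,g}$ are themselves compatible with the coherence isomorphisms $I_{g,h}$. In the cases of interest here the $I_{g,h}$ are actually equalities (cf.\ the remark after Definition~\ref{def:Gconj}), which considerably simplifies the bookkeeping.

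The main obstacle I expect is precisely this last bookkeeping: writing down carefully the square expressing that $\eta_{\cP,g}$ is natural in $\cP$ and then showing that the hexagon of Definition~\ref{def:Gstructure} for $\varphi_{gh}$ decomposes into the already-commutative hexagon for $\phi_{gh}$ (sheafified) glued with two commutativity squares coming from naturality of $\eta$ and the identification $\overline{\overline{(\bullet)}^g}^h = \overline{(\bullet)}^{gh}$. Nothing is deep here, but the diagram chase is the real content; everything else is formal consequence of the adjoint-functor argument for $\eta_{\cP,g}$.
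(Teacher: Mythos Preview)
Your proposal is correct and follows essentially the same approach as the paper: sheafify $\phi_g$ to get $\cP^\#\to(\overline{\cP}^g)^\#$, then compose with a natural isomorphism $(\overline{\cP}^g)^\#\iso\overline{\cP^\#}^g$. The paper constructs this last isomorphism exactly via your ``alternative'' route (universal property of sheafification applied to $\overline{\cP}^g\to\overline{\cP^\#}^g$, then check on stalks), and in fact omits the cocycle verification that you rightly identify as the only place requiring care; your adjunction argument for $\eta_{\cP,g}$ is a clean variant the paper does not mention.
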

\begin{proof}
	For any $g\in G$, we are given
	$$\phi_g\colon \cP\overset{\sim}{\To} \overline{\cP}^g,$$
	and this induces, by the universal property of sheafification, isomorphisms
	$$\cP^\#\overset{\sim}{\To} (\overline{\cP}^g)^\#.$$
	It therefore remains to check that $(\overline{\cP}^g)^\#\iso \overline{(\cP^\#)}^g$.
	
	To see this, note that there is a natural homomorphism $\cP\to\cP^\#$ and hence a natural morphism $\overline{\cP}^g\to \overline{(\cP^\#)}^g$. By the universal property of sheafification, this induces a morphism $(\overline{\cP}^g)^\#\to \overline{(\cP^\#)}^g$. We can check on stalks that it is an isomorphism: $G$-conjugation commutes with taking stalks (see Lemma~\ref{lemma:compatConj}(b)), and stalks of a presheaf and its associated sheaf are the same.
\end{proof}

The following is an easy corollary of Lemma~\ref{lemma:compatConj}. We leave it to the reader to verify that the necessary compatibilities are satisfied.
\begin{corr}
	Let $f\colon X\to Y$ be a morphism of topological spaces. Let $\cF,\cF_1,\cF_2\in \DbL{X}$ and $\cG\in\DbL{Y}$ be equipped with $G$-structures. Then $\RR f_*\cF$, $\RR f_!\cF$, $f^{-1}\cG$, $f^!\cG$, $\cF_1\otimes\cF_2$, $\RHom(\cF_1,\cF_2)$, $\DD_X\cF$ and $\mathrm{H}^i(\cF)$ for any $i\in\Z$ are equipped with an induced $G$-structure.
\end{corr}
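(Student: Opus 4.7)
The uniform strategy is the following. For each operation $T$ listed in the statement, Lemma~\ref{lemma:compatConj} supplies a ``commutation'' isomorphism $\sigma_g\colon T(\overline{\cF}^g) \iso \overline{T(\cF)}^g$ (or its multi-argument analogue), and the given $G$-structure $(\varphi_g)_{g \in G}$ on the input $\cF$ can then be transported to $T(\cF)$ by setting
$$\varphi_g^{T} \vcentcolon= \sigma_g \circ T(\varphi_g)\colon T(\cF) \To T(\overline{\cF}^g) \To \overline{T(\cF)}^g.$$
This recipe applies verbatim to the covariant operations $\RR f_*$, $\RR f_!$, $f^{-1}$, $f^!$ (Lemma~\ref{lemma:compatConj}(a),(b)) and to each slot of $\cF_1 \otimes \cF_2$. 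For the contravariant first slot of $\RHom(\cF_1,\cF_2)$ one plugs in $(\varphi_g^1)^{-1}\colon \overline{\cF_1}^g \To \cF_1$, which makes sense precisely because every $\varphi_g$ is an isomorphism by definition. The functor $\mathrm{H}^i$ is handled analogously, using that $G$-conjugation is exact and therefore commutes with cohomology.

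The duality $\DD_X$ admits two equivalent treatments: directly, via the isomorphism $\DD_X\overline{\cF}^g \iso \overline{\DD_X\cF}^g$ of Lemma~\ref{lemma:compatConj}(d), or indirectly, by writing $\DD_X\cF = \RHom(\cF,\omega_X)$ after endowing $\omega_X = p_X^! L_X$ with the $G$-structure obtained by applying $p_X^!$ to the canonical $G$-structure on the constant sheaf $L_X$ (Example~\ref{ex:Gstructures}). That the two prescriptions coincide is checked by tracing through the proof of Lemma~\ref{lemma:compatConj}(d).

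To verify the cocycle condition of Definition~\ref{def:Gstructure} for $\varphi_g^T$, one would expand both sides of the required equation $I_{g,h} \circ \gamma_h(\varphi_g^T) \circ \varphi_h^T = \varphi_{gh}^T$ using the definition above, and reduce it to a diagram whose commutativity is forced by two properties: the naturality of $\sigma_g$ in $\cF$ (visible in the proof of Lemma~\ref{lemma:compatConj}), together with a cocycle-type compatibility of $\sigma_g$ with the associators $I_{g,h}$ on source and target. Combined with the original cocycle for $(\varphi_g)$ on $\cF$, this closes the diagram.

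The main obstacle is more a bookkeeping burden than a conceptual one: one must confirm for each $T$ that the commutation isomorphisms $\sigma_g$ genuinely intertwine the $G$-conjugation structures on source and target. In the categories at hand, however, the $I_{g,h}$ are equalities and each $\sigma_g$ is built from explicit identifications of sections (for direct images) or of set-theoretic maps (for $\otimes$, $\Hom$, etc.) together with sheafification, so this compatibility is essentially tautological in every case---which is precisely why the author leaves the verification to the reader.
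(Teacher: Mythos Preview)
Your proposal is correct and follows exactly the approach the paper intends: it transports the given $G$-structures through the commutation isomorphisms of Lemma~\ref{lemma:compatConj} and then checks the cocycle condition, which is precisely what the paper leaves to the reader. Your write-up is in fact more detailed than the paper's own treatment, which simply states that the corollary is immediate from Lemma~\ref{lemma:compatConj} and omits the verification of compatibilities.
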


We now introduce the following category associated to a category with $G$-conjugation. It will serve as the target category for Galois descent statements later.

\begin{defi}\label{def:catG}
	Let $\cC(L)$ be an $L$-linear category with $G$-conjugation. Then we define the category $\cC(L)^G$ as follows:
	\begin{itemize}
		\item An object of $\cC(L)^G$ is a pair $(F,(\varphi_g)_{g\in G})$, where $F\in \cC(L)$ is an object and $(\varphi_g)_{g\in G}$ is a $G$-structure on $F$.
		\item A morphism $(F,(\varphi_g)_{g\in G})\to (F',(\varphi'_g)_{g\in G})$ is a morphism $f\colon F\to F'$ in $\cC(L)$ compatible with the $G$-structures, i.e.\ such that for any $g\in G$ the diagram
		$$\begin{tikzcd}
			F\arrow{r}{f}\arrow{d}{\iso}[swap]{\varphi_g} & F'\arrow{d}{\varphi'_g}[swap]{\iso}\\
			\overline{F}^g \arrow{r}{\overline{f}^g} & \overline{F'}^g
		\end{tikzcd}$$
		commutes.
	\end{itemize}
\end{defi}

\subsection{Extension of scalars}
As before, let $L/K$ be a field extension and set $G\vcentcolon=\mathrm{Aut}(L/K)$.

Extension of scalars (change of fields) is a well-known principle for vector spaces, sheaves or schemes, for example. In these examples, it is formed by taking tensor products or fibre products of the object over $K$ with an object determined by the field $L$. In the other direction, an object over $L$ can naturally be considered as an object over $K$. (Note that these processes are not inverse to each other, but adjoint.) In an abstract setting, one could define such an extension/restriction datum as follows.
\begin{defi}\label{def:ExtResScalars}
	Let $\cC(K)$ be a $K$-linear category and $\cC(L)$ an $L$-linear category equipped with a $G$-conjugation $((\gamma_g)_{g\in G}, (I_{g,h})_{g,h\in G})$.
	
	Consider a functor 
	$$\Phi_{L/K}\colon \cC(K)\To \cC(L)$$
	that factors as
	$$\cC(K) \xlongrightarrow{\Phi_{L/K}^G} \cC(L)^G \To \cC(L),$$
	where the second functor is just the one forgetting the $G$-structure. Consider furthermore a functor
	$$\mathsf{for}_{L/K}\colon \cC(L)\to\cC(K)$$
	together with natural isomorphisms of functors $J_g\colon \mathsf{for}_{L/K}\circ \gamma_g \overset{\sim}{\to}\mathsf{for}_{L/K}$ for any $g\in G$ such that for any $g,h\in G$ the diagram
	$$\begin{tikzcd}		
		\mathsf{for}_{L/K}\circ \gamma_h\circ \gamma_g \arrow{rr}{\mathsf{for}_{L/K}\circ I_{g,h}} \arrow{d}{J_{h}\circ \gamma_g} && \mathsf{for}_{L/K}\circ \gamma_{gh}\arrow{d}{J_{gh}}\\
		\mathsf{for}_{L/K}\circ \gamma_g \arrow{rr}{J_g} && \mathsf{for}_{L/K}
	\end{tikzcd}$$
	commutes.
	
	If $\Phi_{L/K}$ is left adjoint to $\mathsf{for}_{L/K}$, i.e. there are natural isomorphisms
	$$\sHom_{\cC(K)}(Y,\mathsf{for}_{L/K}(X))\iso \sHom_{\cC(L)}(\Phi_{L/K}(Y),X)$$
	for any $X\in\cC(L)$, $Y\in\cC(K)$, then we call $\Phi_{L/K}$ a functor of \emph{extension of scalars} and $\mathsf{for}_{L/K}$ a functor of \emph{restriction of scalars}.
	
	If, in such a situation, one has objects $F\in\cC(L)$ and $A\in\cC(K)$ such that $\Phi_{L/K}(A)\iso F$, we say that $A$ is a \emph{$K$-structure} (or \emph{$K$-lattice}) of $F$.
	
	For $F\in\cC(L)$, we will often write $F^K\vcentcolon=\mathsf{for}_{L/K}(F)\in \cC(K)$.
\end{defi}

Similarly to what we saw above for the $I_{g,h}$ in Definition~\ref{def:Gconj}, the isomorphisms of functors $J_g$ will be equalities in our examples of interest.

\begin{rem}
	If one wants to work with different field extensions of a common base field, one should require some more compatibilities of the functors of extension and restriction of scalars. Concretely, if $K\subset L\subset L'$ is a composition of two field extensions, then the functors $\Phi_{L'/L}\circ \Phi_{L/K}$ and $\Phi_{L'/K}$ should be isomorphic, in a way compatible with the rest of the data as introduced above, and similarly for $\mathsf{for}_{L/K}\circ \mathsf{for}_{L'/L}$ and $\mathsf{for}_{L'/K}$. 
	Even though we do not use them, these conditions are satisfied in all the examples we consider.
\end{rem}

\begin{ex}
		The functor $\Vect{K}\to\Vect{L}$ defined by $W\mapsto L\otimes_K W$ is a functor of extension of scalars, and the functor $\Vect{L}\to\Vect{K}$ sending an $L$-vector space to itself (but only remembering the action of $K$) is a corresponding functor of restriction of scalars. Indeed, for $W\in\Vect{K}$, the object $L\otimes_K W$ carries a natural $G$-structure: It is clear that $\overline{L\otimes_K W}^g=\overline{L}^g\otimes_K W$, so the $G$-structure is just given by the natural one on $L$ (see Example~\ref{ex:Gstructures}). Moreover, it is clear that $V^K=(\overline{V}^g)^K$ for any $g\in G$, since $g|_K=\id_K$ and hence the action of $K$ is the same on each $\overline{V}^g$. The property of adjointness of these two functors is classical.
		
		Similarly, a functor of extension of scalars for sheaves is given by $\Phi_{L/K}\colon \Mod{K_X}\to\Mod{L_X}, \cG\mapsto L_X\otimes_{K_X} \cG$ (with the obvious functor of restriction of scalars, which is defined as above for each space of sections $\cF(U)$): The sheaf $L_X\otimes_{K_X} \cG$ (a priori a tensor product of two $K_X$-modules) is the sheafification of the presheaf $U\mapsto L\otimes_K \cG(U)$, and hence a sheaf of $L$-vector spaces. Again, the natural $G$-structure on $L_X\otimes_{K_X} \cG$ is given by the natural $G$-structure on $L_X$ (see Example~\ref{ex:Gstructures}). As above, it is clear that $(\overline{\cF}^g)^K=\cF^K$ for any $g\in G$. The fact that these two functors are adjoint follows from Lemma~\ref{lemma:ScalarExtAdj} below.
		
		Noting that both $\Phi_{L/K}$ and $\mathsf{for}_{L/K}$ for sheaves of vector spaces are exact, we also obtain extension/restriction of scalars functors between derived categories of sheaves.
\end{ex}
When we work with these categories, we will always consider the functors of extension of scalars from the above example.

The following lemma describes the adjunction between extension and restriction of scalars in the case of sheaves of vector spaces. (This is rather classical, cf.\ e.g.\ \cite[Tag 0088]{Stacks}, and is analogously proved for the case of two sheaves of rings $\cR\to \cS$ on $X$ instead of the field extension $K_X\to L_X$.)
\begin{lemma}\label{lemma:ScalarExtAdj}
	Let $\cG\in\DbK{X}$ and $\cF\in\DbL{X}$. Then there are isomorphisms
	$$\RHom_{K_X}(\cG,\mathsf{for}_{L/K}(\cF)) \iso \mathsf{for}_{L/K}\big(\RHom_{L_X}(L_X\otimes_{K_X} \cG,\cF)\big)$$
	in $\DbK{X}$ and
	$$\sHom_{\DbK{X}}(\cG,\mathsf{for}_{L/K}(\cF)) \iso \sHom_{\DbL{X}}(L_X\otimes_{K_X} \cG,\cF)$$
	as $K$-vector spaces.
\end{lemma}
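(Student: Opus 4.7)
The plan is to reduce the derived internal-Hom statement to the underived adjunction at the level of sheaves, and then derive everything using a single injective resolution on the $L_X$-side. The key technical inputs are: (i) since $L/K$ is a field extension, $L$ is flat over $K$, so the functor $L_X \otimes_{K_X} (-)$ is exact; (ii) consequently its right adjoint $\mathsf{for}_{L/K}$ preserves injective objects. The global Hom statement then follows by applying $\RR\Gamma(X,-)$ to the internal isomorphism.

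First I would establish the underived analogue in $\Mod{K_X}$ for a single sheaf $\cG\in\Mod{K_X}$ and $\cF\in\Mod{L_X}$, namely
\[
\Hom_{K_X}(\cG,\mathsf{for}_{L/K}(\cF))\iso \mathsf{for}_{L/K}\big(\Hom_{L_X}(L_X\otimes_{K_X}\cG,\cF)\big).
\]
On sections over an open $U\subseteq X$, both sides equal the set of $K_U$-linear, resp.\ $L_U$-linear, sheaf morphisms $\cG|_U\to \cF|_U$, and these are in natural bijection by the tensor-hom adjunction: a $K_U$-linear $\cG|_U\to\cF|_U$ extends uniquely to an $L_U$-linear morphism out of the presheaf $U'\mapsto L\otimes_K\cG(U')$, and hence, by the universal property of sheafification (together with the remark in the previous section identifying $L_X\otimes_{K_X}\cG$ with this sheafification), uniquely to an $L_U$-linear morphism $(L_X\otimes_{K_X}\cG)|_U\to \cF|_U$. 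Compatibility with restrictions is clear, giving the claimed isomorphism of $K_X$-modules.

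Next I would derive. Choose a bounded-below injective resolution $\cF\to I^\bullet$ in $\Mod{L_X}$. Since $L_X\otimes_{K_X}(-)$ is exact, its right adjoint $\mathsf{for}_{L/K}$ sends injectives to injectives, so $\mathsf{for}_{L/K}(\cF)\to\mathsf{for}_{L/K}(I^\bullet)$ is an injective resolution in $\Mod{K_X}$. Still by flatness of $L$ over $K$, the complex $L_X\otimes_{K_X}\cG$ computes the derived tensor product, so no resolution of $\cG$ is needed. Both sides of the desired formula are therefore represented by the total complex of the double complex obtained by applying, respectively, $\Hom_{K_X}(\cG^{-p},\mathsf{for}_{L/K}(I^q))$ and $\mathsf{for}_{L/K}\Hom_{L_X}((L_X\otimes_{K_X}\cG)^{-p},I^q)$. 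Applying the underived isomorphism termwise and totalizing yields the first asserted isomorphism in $\DbK{X}$.

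For the second statement, apply $\RR\Gamma(X,-)$ to the internal isomorphism and take $\mathrm{H}^0$. On the right-hand side, $\mathsf{for}_{L/K}$ commutes with $\RR\Gamma(X,-)$ (the underlying sheaves of $K$-modules of injective $L_X$-modules are $\Gamma(X,-)$-acyclic for computing $K$-module cohomology, since they are $\Gamma$-acyclic as abelian sheaves) and with the cohomology functor $\mathrm{H}^0$, giving the desired identification of hom sets. The only real obstacle is the sheafification subtlety in the underived step, but this is exactly what the remark at the end of Section~\ref{sec:Sheaves} was designed to handle.
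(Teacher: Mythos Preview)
Your proposal is correct and follows essentially the same route as the paper: establish the underived sheaf-$\Hom$ adjunction via the presheaf/sheafification description, then derive using that $\mathsf{for}_{L/K}$ preserves injectives (as right adjoint to an exact functor), and finally pass to global sections and $\mathrm{H}^0$. The only cosmetic difference is that the paper records the global $\sHom$ adjunction before the sheaf $\Hom$ version, whereas you deduce the global statement from the internal one at the end; both orderings are fine.
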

\begin{proof}
	For vector spaces $W\in\Vect{K}$ and $V\in\Vect{L}$, it is well-known that
	$$\sHom_K(W,\mathsf{for}_{L/K}(V))\iso \sHom_L(L\otimes_K W, V)$$
	(this is an isomorphism of $K$-vector spaces, i.e.\ there is a ``hidden'' functor $\mathsf{for}_{L/K}$ on the right-hand side),
	and the morphism from left to right is given by $L$-linear continuation.
	
	Let now $\cG\in\Mod{K_X}$ and $\cF\in\Mod{L_X}$. Then we get
	$$\sHom_{\Mod{K_X}}(\cG,\mathsf{for}_{L/K}(\cF)) \iso \sHom_{\Mod{L_X}}(L_X\otimes_{K_X} \cG,\cF)$$
	as follows: An element of the left hom space is a compatible collection of $K$-linear morphisms $\cG(U)\to\cF(U)$ for any open $U\subseteq X$. By the statement for vector spaces, it is easy to see that this corresponds to a compatible collection of $L$-linear morphisms $L\otimes_K \cG(U)\to \cF(U)$, which represent a morphism of presheaves from the presheaf given by $U\mapsto L\otimes_K \cG(U)$ to $\cF$. By the universal property of sheafification (and since $\cF$ is already a sheaf), this is equivalent to a morphism $L_X\otimes_{K_X} \cG\to\cF$.
	
	Finally, we obtain an isomorphism
	$$\Hom_{K_X}(\cG,\mathsf{for}_{L/K}(\cF)) \iso \mathsf{for}_{L/K}\big(\Hom_{L_X}(L_X\otimes_{K_X} \cG,\cF)\big)$$
	by defining it on sections over any open $U\subseteq X$, which means that we need a compatible collection of isomorphisms
	$$\sHom_{\Mod{K_X}}(\cG|_U,\mathsf{for}_{L/K}(\cF|_U)) \iso \sHom_{\Mod{L_X}}(L_U\otimes_{K_U} \cG|_U,\cF|_U)$$
	for any such $U$, which is what we have constructed above.
	
	The statement of the lemma now follows by deriving functors (noting that $\mathsf{for}_{L/K}$ is exact and preserves injectives since it has an exact left adjoint by the above, cf.\ e.g.\ \cite[Tag 015Z]{Stacks}) and taking zeroth cohomology and global sections.
\end{proof}

\subsection{Galois descent}\label{subsec:Galois}

Given a functor of extension of scalars as in the previous subsection immediately raises the question about a construction in the other direction: Given an object over $L$, does it admit a $K$-structure? In general, this is certainly not the case, but the idea is that from a given $G$-structure on $L$ we might indeed be able to construct a $K$-structure. The context in which we can expect such a descent statement is mainly that of a Galois extension $L/K$, since in this case the group $G=\mathrm{Aut}(L/K)$ (which is nothing but the Galois group) ``knows enough'' about the subfield $K$. This idea is made more precise by the concept of Galois descent.

\begin{defi}\label{def:GaloisDescent}
	Let $L/K$ be a Galois extension, $\cC(K)$ a $K$-linear category, $\cC(L)$ an $L$-linear category with a $G$-conjugation, and let these categories be equipped with extension and restriction of scalars as in Definition~\ref{def:ExtResScalars}.
	Then we say that \emph{Galois descent} is satisfied in this setting if $\Phi_{L/K}^G\colon \cC(K)\to\cC(L)^G$ is an equivalence of categories.
\end{defi}

Let now $L/K$ be a finite Galois extension with Galois group $G$. In this case, it is well-known that Galois descent is satisfied for vector spaces, see \cite{Conrad} for an exposition (as well as the references therein and in the introduction above). We briefly recall the construction: The functor
$$\Phi_{L/K}\colon \Vect{K}\To \Vect{L},\qquad V\longmapsto L\otimes_K V$$
induces an equivalence between $K$-vector spaces and $L$-vector spaces equipped with a $G$-structure.
The quasi-inverse of $\Phi_{L/K}^G\colon \Vect{K}\to \Vect{L}^G$ can be explicitly described:

Let $V$ be an $L$-vector space equipped with a $G$-structure $(\varphi_g)_{g\in G}$. Recall that we write $V^K\vcentcolon= \mathsf{for}_{L/K}(V)$. Since $V^K=(\overline{V}^g)^K$ for any $g\in G$, the $L$-linear isomorphisms $\varphi_g\colon V\overset{\sim}{\to} \overline{V}^g$ can be interpreted as $K$-linear automorphisms $\varphi_g^K\vcentcolon= \mathsf{for}_{L/K}(\varphi_g)$ of $V^K$. Then one defines the space of invariants of all these automorphisms
\begin{align*}
	V_K&\vcentcolon= \{v\in V^K\mid \varphi_g^K(v)=v\text{ for any $g\in G$}\} \\ &= \ker \left( \prod_{g\in G} (\varphi_g^K-\id_{V^K})\colon V^K \to \prod_{g\in G} V^K \right).
\end{align*}
This is a $K$-sub-vector space of $V$ and one can show (see \cite[Theorem~2.14]{Conrad} and its proof) that the natural morphism $L\otimes_K V_K\to V, \sum_i \ell_i\otimes v_i\mapsto \sum_i \ell_i v_i$ is an isomorphism.

The aim of the rest of this paper is to investigate the extension-of-scalars functor for sheaves of vector spaces, and to study Galois descent in this framework. We will first establish (not only in the case of Galois extensions) some statements about homomorphisms between extensions of scalars. Then, we will illustrate the problems that occur when we want to set up Galois descent in derived categories, and we will give a proof of Galois descent for perverse sheaves.

\section{Scalar extensions and operations on sheaves}\label{sec:Hom}

Let $L/K$ be a field extension. (We do not assume that $L/K$ is finite or Galois in this section, if not explicitly stated.) Let us remark that -- with the exception of Proposition~\ref{prop:morphLificationFin} -- everything in this section will actually not depend on $L$ being a field and works completely analogously if $K$ is a field and $L$ is a $K$-algebra, for example.

In this section, we want to describe the behaviour of extension of scalars in connection with the six Grothendieck operations for sheaves, and in particular spaces of morphisms $L_X\otimes_{K_X}\cG_1\to L_X\otimes_{K_X} \cG_2$ for $\cG_1,\cG_2\in\DbK{X}$.

As a motivation for the latter, consider the following well-known fact from linear algebra: Let $V,W\in\Vect{K}$. If $L/K$ is a finite field extension or $V$ and $W$ are finite-dimensional, then
\begin{equation*}
\sHom_L(L\otimes_K V,L\otimes_K W)\iso L\otimes_K \sHom_K(V,W).
\end{equation*}
This implies that in the case of finite field extensions the functor of extension of scalars for vector spaces is faithful. In the case of arbitrary field extensions, it is still faithful if we restrict to finite-dimensional vector spaces.

We will develop analogous statements about homomorphism spaces in the context of sheaves. Similarly to the above, we impose some additional assumption if we do not require the field extension to be finite. For sheaves, we will replace the above finiteness assumption by $\R$-constructibility.

Similarly, the compatibilities between extension of scalars and direct and inverse images of sheaves will hold in general in the case of finite fied extensions, while some of them require constructibility assumptions in the general case.

\subsection{Functorialities for sheaf operations}
We will recall here very briefly some compatibilities between the six Grothendieck operations on sheaves that we will use. Let $k$ be a field (which is the case considered in this work, but not necessary for the isomorphisms below).

By the standard theory of sheaves, we have the following natural isomorphisms for $\cF\in \Dbk{X}$, $\cG,\cG_1,\cG_2\in\Dbk{Y}$ and a continuous map $f\colon X\to Y$:
\begin{equation}\begin{aligned}\label{eq:standardCompat}
	&\RR f_*\RHom(f^{-1}\cG,\cF)\iso \RHom(\cG,\RR f_*\cF), && f^{-1}(\cG_1\otimes \cG_2) \iso f^{-1}\cG_1 \otimes f^{-1}\cG_2,\\
	&\RR f_!(f^{-1}\cG\otimes \cF)\iso \cG\otimes \RR f_!\cF, && f^!\RHom(\cG_1,\cG_2) \iso \RHom(f^{-1}\cG_1,f^!\cG_2).
\end{aligned}\end{equation}
These can be found in \cite[Propositions 2.6.4, 2.3.5, 2.6.6 and 3.1.13]{KS90}, for example.

For the remaining combinations of direct and inverse images with hom or tensor product, we only have natural \emph{morphisms} in general:
\begin{align}\label{eq:standardMorph}
	&\RR f_*(f^{-1}\cG\otimes \cF)\leftarrow \cG\otimes \RR f_*\cF, && f^{-1}\RHom(\cG_1, \cG_2) \to \RHom(f^{-1}\cG_1, f^{-1}\cG_2),\notag\\
	&\RR f_!\RHom(f^{-1}\cG, \cF)\to \RHom(\cG,\RR f_!\cF), && f^!(\cG_1\otimes \cG_2) \leftarrow f^{-1}\cG_1\otimes f^!\cG_2.
\end{align}
(These follow from \cite[(2.6.21), (2.6.27), (2.6.15) and Proposition~3.1.11]{KS90}.)

In \cite{HS23}, we show that under certain constructibility conditions on the sheaves involved, these natural morphisms are isomorphisms. In particular, if $\cG_1$ is locally constant (of arbitrary rank) and $\cG_2$ is $\R$-constructible, the two morphisms on the right of \eqref{eq:standardMorph} (those concerning the inverse images) are isomorphisms.

For the morphisms on the left, stronger conditions are necessary (involving the notion of b-analytic space from \cite{Sch23}) to guarantee that they are isomorphisms.

We refer to \cite{HS23} for the full details and more general conditions (some isomorphisms also hold for weakly constructible sheaves).

For the compatibility of hom with tensor products, we have the following natural morphism in general if $\cF_1,\cF_2,\cF_3\in\Dbk{X}$:
\begin{equation}\label{eq:homtensMorph}
\RHom(\cF_1,\cF_2)\otimes \cF_3 \To \RHom(\cF_1,\cF_2\otimes \cF_3).
\end{equation}
This morphism is in particular an isomorphism if $\cF_3$ is locally constant and $\cF_1,\cF_2$ are $\R$-constructible, as shown in \cite[Theorem 4.4(b)]{HS23}.

Let us remark that the results from \cite{HS23} are even more general than what we will use here. In particular, some isomorphisms hold already if some of the sheaves are only \emph{weakly} $\R$-constructible.

\subsection{Direct and inverse images}

Let us first show that we have natural morphisms that are linear over $L$. For this purpose, we give the following two lemmas.

\begin{lemma}\label{lemma:forCommut}
	The functor $\mathsf{for}_{L/K}$ commutes with the functors $\RR f_*$, $\RR f_!$, $f^{-1}$ and $f^!$.
\end{lemma}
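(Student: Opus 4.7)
The plan is to treat the four functors in groups: first the underived versions, then derive, with $f^!$ handled last via adjunction.

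At the underived level, the statements are essentially tautological because the abelian structure of $\Mod{L_X}$ and $\Mod{K_X}$ is the same up to forgetting the $L$-action. For $f_*$, the sections $(f_*\cF)(U) = \cF(f^{-1}(U))$ are manifestly the same $K$-vector space before and after applying $\mathsf{for}_{L/K}$. For $f_!$, the subsheaf cut out by properness of support on fibres is defined purely in terms of the topology of $X$ and $Y$, independently of the scalars, so $\mathsf{for}_{L/K} f_! = f_! \mathsf{for}_{L/K}$. For $f^{-1}$, the inverse image presheaf $U \mapsto \varinjlim_{V \supseteq f(U)} \cG(V)$ commutes with $\mathsf{for}_{L/K}$ (colimits of $L$-modules viewed as $K$-modules are computed underneath), and sheafification commutes with $\mathsf{for}_{L/K}$ because both can be checked stalkwise.

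For the derived versions of $\RR f_*$, $\RR f_!$ and $f^{-1}$, the key observation is that $\mathsf{for}_{L/K}$ is exact (it is the identity on the underlying additive structure and $K$ is a subfield of $L$) and preserves injectives, since it admits the exact left adjoint $\Phi_{L/K} = L_X \otimes_{K_X} (-)$ by Lemma~\ref{lemma:ScalarExtAdj} (cf.\ \cite[Tag 015Z]{Stacks}). The case of $f^{-1}$ is then immediate from exactness. For $\RR f_*$, given an injective resolution $\cF \to \cI^\bullet$ in $\Mod{L_X}$, the complex $\mathsf{for}_{L/K}\cI^\bullet$ is an injective resolution of $\mathsf{for}_{L/K}\cF$ in $\Mod{K_X}$, and the underived compatibility of $f_*$ and $\mathsf{for}_{L/K}$ gives the result. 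For $\RR f_!$, it suffices to work with c-soft (or flabby) resolutions; these notions are about surjectivity of certain restriction maps and hence preserved under $\mathsf{for}_{L/K}$, so the same argument goes through.

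The remaining case $f^!$ is the one where the construction is least explicit, so I would establish it by adjunction and Yoneda rather than by an explicit formula. For $\cG \in \DbL{Y}$ and any $\cF \in \DbK{X}$, chaining the adjunctions $(\Phi_{L/K},\mathsf{for}_{L/K})$ and $(\RR f_!, f^!)$ together with the projection formula $\RR f_!(L_X \otimes_{K_X} \cF) \iso L_Y \otimes_{K_Y} \RR f_! \cF$ (the first isomorphism on the left of \eqref{eq:standardCompat} applied with $\cG = L_Y$) yields
\[
\sHom_{\DbK{X}}(\cF,\mathsf{for}_{L/K}f^!\cG)
\iso \sHom_{\DbL{Y}}(L_Y \otimes_{K_Y}\RR f_!\cF,\cG)
\iso \sHom_{\DbK{X}}(\cF,f^!\mathsf{for}_{L/K}\cG),
\]
and the resulting natural isomorphism of representable functors gives $\mathsf{for}_{L/K}f^!\cG \iso f^!\mathsf{for}_{L/K}\cG$. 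The main potential obstacle is ensuring the identifications are genuinely natural and that the projection formula is available in the form stated; both are handled by \eqref{eq:standardCompat} and the usual adjunction formalism, so no real difficulty arises.
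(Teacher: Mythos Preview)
Your proof is correct and follows essentially the same route as the paper: establish the underived identifications directly from the definitions, derive using that $\mathsf{for}_{L/K}$ is exact and preserves injectives (via its exact left adjoint), and handle $f^!$ by the Yoneda argument chaining the two adjunctions through the projection formula for $\RR f_!$. The only cosmetic difference is that you invoke the projection formula directly from \eqref{eq:standardCompat}, whereas the paper phrases the same step as a forward reference to Proposition~\ref{prop:extIm}.
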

\begin{proof}
	We consider a continuous map $f\colon X\to Y$. By the definition of the direct image, it is easy to see that $\mathsf{for}_{L/K}\circ f_*\iso f_*\circ \mathsf{for}_{L/K}$. The statement follows by deriving functors, noting that $\mathsf{for}_{L/K}$ is exact and preserves injectives since it admits an exact left adjoint. We can argue similarly for $\RR f_!$ and $f^{-1}$. For $f^!$, we can proceed by adjunction (using Lemma~\ref{lemma:ScalarExtAdj}):
	\begin{align*}
		\sHom_{\DbK{X}}(\cF,\mathsf{for}_{L/K}f^!\cG)&\iso \sHom_{\DbL{X}}(L_X\otimes_{K_X}\cF,f^!\cG)\\
		&\iso \sHom_{\DbL{Y}}(\RR f_!(L_X\otimes_{K_X}\cF),\cG)\\
		&\iso \sHom_{\DbL{Y}}(L_Y\otimes_{K_Y}\RR f_!\cF,\cG)\\
		&\iso \sHom_{\DbK{Y}}(\RR f_!\cF,\mathsf{for}_{L/K}\cG)\\
		&\iso \sHom_{\DbK{X}}(\cF,f^!\mathsf{for}_{L/K}\cG)
	\end{align*}
	In the third isomorphism, we have already used commutation between scalar extension	and $\RR f_!$, which will be proved (independently of the claim for $f^!$ here) in Proposition~\ref{prop:extIm} below.
\end{proof}
A posteriori, the preceding lemma is the reason why we do not distinguish between the direct and inverse image operations for sheaves over $K$ and $L$ in our notation.

\begin{lemma}\label{lemma:natMorphIm}
	Let $f\colon X\to Y$ be a continuous map. For $\cF\in\DbK{X}$, there exist natural morphisms in $\DbL{Y}$
	$$L_Y\otimes_{K_Y} \RR f_* \cF \to \RR f_* (L_X\otimes_{K_X}\cF), \qquad L_Y\otimes_{K_Y} \RR f_! \cF \to \RR f_! (L_X\otimes_{K_X}\cF),$$
	and for $\cG\in\DbK{Y}$, there exist natural morphisms in $\DbL{X}$
	$$L_X\otimes_{K_X} f^{-1} \cG \to f^{-1} (L_Y\otimes_{K_Y}\cG), \qquad L_X\otimes_{K_X} f^! \cG \to f^! (L_Y\otimes_{K_Y}\cG).$$
\end{lemma}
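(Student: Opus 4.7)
The plan is to construct all four morphisms uniformly by combining the unit of the extension/restriction adjunction with Lemma~\ref{lemma:forCommut}. Writing $\Phi = L_{(-)}\otimes_{K_{(-)}}(-)$ for the extension-of-scalars functor, the adjunction $\Phi \dashv \mathsf{for}_{L/K}$ from Lemma~\ref{lemma:ScalarExtAdj} provides a unit natural transformation
\[
\eta\colon \id \To \mathsf{for}_{L/K}\circ \Phi
\]
on $\DbK{X}$ (and likewise on $\DbK{Y}$). This is the single ingredient that, together with the previous lemma, yields all four desired comparison maps.

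For each of the four operations $T \in \{\RR f_*, \RR f_!, f^{-1}, f^!\}$, I first apply $T$ to the unit $\eta$, then use the commutation isomorphism from Lemma~\ref{lemma:forCommut} to move $\mathsf{for}_{L/K}$ past $T$:
\[
T(\cF)\;\xlongrightarrow{T(\eta_\cF)}\; T\bigl(\mathsf{for}_{L/K}\Phi(\cF)\bigr)\;\xlongrightarrow{\sim}\;\mathsf{for}_{L/K}\bigl(T(\Phi(\cF))\bigr).
\]
Applying the adjunction $\Phi \dashv \mathsf{for}_{L/K}$ from Lemma~\ref{lemma:ScalarExtAdj} to the composite on the right then yields a morphism
\[
\Phi\bigl(T(\cF)\bigr) \To T\bigl(\Phi(\cF)\bigr),
\]
which is exactly the kind of morphism requested in the statement (with $\cF$ replaced by $\cG$ in the two pull-back cases). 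Naturality in $\cF$ (resp.\ $\cG$) is automatic since both $\eta$ and the commutation isomorphisms of Lemma~\ref{lemma:forCommut} are natural.

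I do not expect any real obstacle here: the construction is forced by formal adjunction arguments and the only nontrivial input, namely that $\mathsf{for}_{L/K}$ commutes with all four of $\RR f_*, \RR f_!, f^{-1}, f^!$, has already been established in Lemma~\ref{lemma:forCommut}. The one bookkeeping point worth mentioning is that the morphisms are $L$-linear by construction (the target and source are both genuine $L$-sheaves, and the adjunction isomorphism of Lemma~\ref{lemma:ScalarExtAdj} produces morphisms in $\DbL{X}$ or $\DbL{Y}$); this is precisely why I pass through the $\Phi \dashv \mathsf{for}_{L/K}$ adjunction rather than trying to write down an $L$-linear morphism by hand from tensor-product definitions. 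The question of when these morphisms are in fact isomorphisms — which will require finiteness of $L/K$ or $\R$-constructibility hypotheses together with the isomorphisms from \cite{HS23} recalled in \eqref{eq:standardCompat}--\eqref{eq:homtensMorph} — is the content of the subsequent results and is not needed here.
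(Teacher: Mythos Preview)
Your proof is correct and is essentially the same as the paper's: the paper also uses Lemma~\ref{lemma:ScalarExtAdj} and Lemma~\ref{lemma:forCommut}, constructing the morphism by chasing the identity of $\Phi(\cF)$ through the adjunction and the commutation isomorphism, which amounts exactly to your use of the unit $\eta$ followed by the adjunction. Your phrasing in terms of the unit is slightly cleaner, but the underlying argument is identical.
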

\begin{proof}
	All the proofs are similar, so let us only prove the first statement. We have
	\begin{align*}
		\sHom_{\DbL{Y}}&\big(L_Y\otimes_{K_Y} \RR f_* \cF, \RR f_* (L_X\otimes_{K_X}\cF)\big)\\
		&\iso \sHom_{\DbK{Y}}\big(\RR f_* \cF, \mathsf{for}_{L/K} \RR f_* (L_X\otimes_{K_X}\cF)\big)\\
		&\iso \sHom_{\DbK{Y}}\big(\RR f_* \cF, \RR f_* \mathsf{for}_{L/K} (L_X\otimes_{K_X}\cF)\big)\\
		&\leftarrow \sHom_{\DbK{X}}\big(\cF, \mathsf{for}_{L/K} (L_X\otimes_{K_X}\cF)\big)\\
		&\iso \sHom_{\DbL{X}}(L_X\otimes_{K_X}\cF,L_X\otimes_{K_X}\cF),
	\end{align*}
	using Lemma~\ref{lemma:ScalarExtAdj} and Lemma~\ref{lemma:forCommut}. The identity element in the last space induces the desired morphism.
\end{proof}

Let us remark that, from what we have recalled in equations \eqref{eq:standardCompat} and \eqref{eq:standardMorph}, morphisms as in the previous lemma certainly exist in $\DbK{Y}$ and $\DbK{X}$. The subtlety of this lemma was therefore to insist on the fact they exist over $L$.

We can now state the compatibilities of extension of scalars with direct and inverse image operations. The first two isomorphisms were already mentioned in \cite{BHHS22}.

\begin{prop}\label{prop:extIm}
	Let $L/K$ be a field extension. Let $f\colon X\to Y$ be a continuous map between topological spaces. Let $\cF\in\DbK{X}$ and $\cG\in\DbK{Y}$. 
	We have isomorphisms over $L$
	\begin{align*}
		L_Y\otimes_{K_Y} \RR f_! \cF &\iso \RR f_! (L_X\otimes_{K_X}\cF),\\
		L_X\otimes_{K_X} f^{-1} \cG &\iso f^{-1} (L_Y\otimes_{K_Y}\cG).
	\end{align*}	
	Consider now the following conditions:
	\begin{itemize}
		\item[(a)] $L/K$ is finite,
		\item[(b)] $X$ and $Y$ are real analytic manifolds, $f\colon X\to Y$ is a morphism of real analytic manifolds, and $\cG\in\DbRcK{Y}$.
	\end{itemize}
	If (a) or (b) is satisfied, then there is an isomorphism in $\DbL{X}$
	\begin{align*}
		L_X\otimes_{K_X} f^! \cG &\iso f^! (L_Y\otimes_{K_Y}\cG).
	\end{align*}
	If (a) is satisfied, we have moreover an isomorphism in $\DbL{Y}$
	$$L_Y\otimes_{K_Y} \RR f_* \cF \iso \RR f_* (L_X\otimes_{K_X}\cF).$$
	All these isomorphisms are induced by the natural morphisms from Lemma~\ref{lemma:natMorphIm}.
\end{prop}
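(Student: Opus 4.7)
The plan is to reduce each statement to the standard sheaf-theoretic compatibilities from Section~\ref{sec:Hom}, using throughout $f^{-1}L_Y\iso L_X$ and the exactness and conservativity of $\mathsf{for}_{L/K}$. The two unconditional isomorphisms are immediate: compatibility of $f^{-1}$ with $\otimes$ from \eqref{eq:standardCompat} gives $L_X\otimes_{K_X}f^{-1}\cG\iso f^{-1}L_Y\otimes_{K_X}f^{-1}\cG\iso f^{-1}(L_Y\otimes_{K_Y}\cG)$, and the projection formula $\RR f_!(f^{-1}\cG\otimes\cF)\iso \cG\otimes\RR f_!\cF$ applied to $\cG=L_Y$ yields the statement for $\RR f_!$. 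In each case, one has to check that the isomorphism so obtained agrees with the natural morphism from Lemma~\ref{lemma:natMorphIm}; this is a diagram chase relying on the uniqueness in the adjunction defining the latter and on Lemma~\ref{lemma:forCommut}.

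Under assumption (a), the extension $L/K$ is finite, so $L\cong K^n$ as $K$-vector spaces for some $n\in\Z_{>0}$ and hence $L_Y\cong (K_Y)^n$ as $K_Y$-modules, giving $L_Y\otimes_{K_Y}\cH\cong \cH^{\oplus n}$ for any $\cH\in\DbK{Y}$. Since $\RR f_*$ and $f^!$ are triangulated, they preserve finite direct sums. Forgetting scalars via $\mathsf{for}_{L/K}$, which commutes with $\RR f_*$ and $f^!$ by Lemma~\ref{lemma:forCommut}, the $L$-linear natural morphisms of Lemma~\ref{lemma:natMorphIm} become the canonical identifications between $n$-fold direct sums; conservativity of $\mathsf{for}_{L/K}$ then promotes them to isomorphisms in the $L$-linear derived categories.

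Under assumption (b), I would instead invoke the result of \cite{HS23} recalled at the end of Section~\ref{sec:Hom}: the natural morphism $f^{-1}\cG_1\otimes f^!\cG_2\to f^!(\cG_1\otimes \cG_2)$ is an isomorphism whenever $\cG_1$ is locally constant (of arbitrary rank) and $\cG_2$ is $\R$-constructible. Applied to $\cG_1=L_Y$ (constant) and $\cG_2=\cG\in\DbRcK{Y}$, combined with $f^{-1}L_Y\iso L_X$, this provides the desired isomorphism. I expect the only genuinely nontrivial step in the whole argument to be the identification of these \emph{intrinsic} morphisms with those constructed in Lemma~\ref{lemma:natMorphIm}; since both sides are characterised via the unit/counit of the scalar extension adjunction, this reduces to routine naturality diagrams, but it is the only point that requires any real care.
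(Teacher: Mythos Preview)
Your proof is correct and follows the paper's approach closely: the unconditional cases use the projection formula and the compatibility of $f^{-1}$ with tensor products exactly as the paper does, and case (b) for $f^!$ is handled via the same result from \cite{HS23}. The only variation is in the treatment of the finite case (a): you argue directly that $\RR f_*$ and $f^!$ preserve the finite direct sum $L_X\otimes_{K_X}(-)\cong(-)^{\oplus n}$, whereas the paper rewrites this functor as $\RHom_{K_X}(L_X,-)$ (finite sums being finite products) and then invokes the standard isomorphisms $\RR f_*\RHom(f^{-1}\cG,\cF)\iso\RHom(\cG,\RR f_*\cF)$ and $f^!\RHom(\cG_1,\cG_2)\iso\RHom(f^{-1}\cG_1,f^!\cG_2)$ from \eqref{eq:standardCompat}. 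Both arguments are equivalent in substance; your formulation is arguably more transparent, while the paper's has the slight advantage of fitting all four cases into the uniform pattern of \eqref{eq:standardCompat}. Your care about identifying the constructed isomorphisms with the natural morphisms of Lemma~\ref{lemma:natMorphIm} is well placed; the paper addresses this only by remarking that it suffices to check the isomorphism after applying $\mathsf{for}_{L/K}$.
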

\begin{proof}
	We already know that there are canonical morphisms , linear over $L$, so it remains to check that they are isomorphisms (which can be checked over $K$).
	
	The first two isomorphisms are due to the classical projection formula and the compatibility of inverse images with tensor products (see \eqref{eq:standardCompat}), respectively (note that $f^{-1}L_Y\iso L_X$).
	
	If $L/K$ is finite of degree $n$, we can show the other two isomorphisms as follows: As $K$-vector spaces, we have $L\iso K^n$, so $L_X\otimes_{K_X} (-)\iso \RHom_{K_X}(L_X,-)$ (finite sums are finite products).
	
	Therefore, the isomorphisms follow from the first and last isomorphisms in \eqref{eq:standardCompat}.
	
	If (b) is satisfied, we still get the isomorphism for the exceptional inverse image from \cite[Theorem 4.2(b)]{HS23} (i.e.\ the fourth morphism in \eqref{eq:standardMorph} is an isomorphism in this case).
\end{proof}

\begin{rem}\label{rem:directImage}
	Let us remark that for the direct image, $\R$-constructiblity of $\cF$ is not sufficient to get a commutation with extension of scalars in general.
	
	Let us give a counterexample for the last isomorphism in Proposition~\ref{prop:extIm} in the case of an infinite field extension: Consider $X=\R_{> 0}=\;]0, \infty[\;\subset \R$, $Y=\R$ and $f\colon X\to Y$ the inclusion. Moreover, consider the field extension $\Q\subset \C$. Define the sheaf
	$$\cG \vcentcolon= \prod_{n\in \Z_{>0}} \Q_{\{\frac{1}{n}\}}.$$
	It has stalk $\Q$ at any point $\frac{1}{n}$ and stalk $0$ otherwise. Therefore, we have
	$$\C_X\otimes_{\Q_X}\cG\iso \prod_{n\in \Z_{>0}} \C_{\{\frac{1}{n}\}}.$$
	On the other hand, if $U$ is a small neighbourhood of $0\in Y$ (automatically containing infinitely many points $\frac{1}{n}$), we have
	$$(f_* \cG)(U) \iso \prod_{n\geq N} \Q\iso t^N\Q[\![t]\!]$$
	for some $N$, and hence the stalk is
	$$(f_* \cG)_0 \iso \varinjlim_{N\to \infty} t^N\Q[\![t]\!].$$
	
	Similarly,
	$$\big(f_*(\C_X\otimes_{\Q_X}\cG)\big)_0 \iso \varinjlim_{N\to \infty} t^N\C[\![t]\!],$$
	and this is not isomorphic to $\C\otimes_\Q\varinjlim_{N\to \infty} t^N\Q[\![t]\!]\iso \varinjlim_{N\to \infty} (\C\otimes_\Q t^N\Q[\![t]\!])$: An element of the first vector space is represented by a formal power series that might have infinitely many linearly independent (over $\Q$) coefficients, while an element of the second object needs to be represented by a finite sum of $\Q$-power series multiplied by a complex number, and two such series are equivalent only if they differ in a finite number of coefficients.
	
	In this example, $\cF$ is indeed $\R$-constructible, but its extension to a compactification of $X$ is not. For a statement about direct images in the case of infinite extensions, we need to require that $X$, $Y$, $f$ and $\cF$ fit into the context of b-constructible sheaves (see \cite{Sch23} and \cite[Theorem~4.7]{HS23}).
\end{rem}

\subsection{Homomorphisms between scalar extensions}

We will now study the functors $\RHom$ and $\sHom$. As for the direct and inverse images above, let us first establish natural morphisms which are $L$-linear.

\begin{lemma}\label{lemma:natMorph}
	Let $L/K$ be a field extension, $X$ a topological space and $\cF,\cG\in\DbK{X}$. There is a natural morphism in $\DbL{X}$
	$$L_X\otimes_{K_X} \RHom_{K_X}(\cF,\cG) \to \RHom_{L_X}(L_X\otimes_{K_X}\cF,L_X\otimes_{K_X}\cG)$$
	and a natural morphism of $L$-vector spaces
	$$L\otimes_{K} \sHom_{\DbK{X}}(\cF,\cG) \to \sHom_{\DbL{X}}(L_X\otimes_{K_X}\cF,L_X\otimes_{K_X}\cG).$$
\end{lemma}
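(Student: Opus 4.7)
The strategy parallels the construction carried out in Lemma~\ref{lemma:natMorphIm} for the direct/inverse image functors: both morphisms will be produced from the unit $\eta_\cG\colon \cG \to \mathsf{for}_{L/K}(L_X\otimes_{K_X}\cG)$ of the extension/restriction adjunction, combined with the adjunction isomorphism of Lemma~\ref{lemma:ScalarExtAdj}.

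For the second (non-internal) statement, composition with $\eta_\cG$ induces a $K$-linear map
$$\sHom_{\DbK{X}}(\cF,\cG) \to \sHom_{\DbK{X}}(\cF,\mathsf{for}_{L/K}(L_X\otimes_{K_X}\cG)),$$
and Lemma~\ref{lemma:ScalarExtAdj} identifies the target with $\sHom_{\DbL{X}}(L_X\otimes_{K_X}\cF,L_X\otimes_{K_X}\cG)$. Since the target inherits an $L$-vector space structure from the $L_X$-module structure on $L_X\otimes_{K_X}\cG$, the universal property of $\otimes_K$ extends this $K$-linear map uniquely to the desired $L$-linear morphism.

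For the internal Hom version, the same strategy works at the sheaf level. Using the internal form of the adjunction from Lemma~\ref{lemma:ScalarExtAdj},
$$\RHom_{K_X}(\cF,\mathsf{for}_{L/K}(L_X\otimes_{K_X}\cG)) \iso \mathsf{for}_{L/K}\RHom_{L_X}(L_X\otimes_{K_X}\cF,L_X\otimes_{K_X}\cG),$$
applying $\RHom_{K_X}(\cF,-)$ to $\eta_\cG$ yields a morphism in $\DbK{X}$
$$\RHom_{K_X}(\cF,\cG) \to \mathsf{for}_{L/K}\RHom_{L_X}(L_X\otimes_{K_X}\cF,L_X\otimes_{K_X}\cG).$$
A final application of the adjunction between $L_X\otimes_{K_X}(-)$ and $\mathsf{for}_{L/K}$ (exactly as at the end of the proof of Lemma~\ref{lemma:natMorphIm}) converts this into the desired $L$-linear morphism in $\DbL{X}$.

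I do not anticipate a real obstacle here: the lemma only records the existence of canonical comparison maps, playing a role analogous to those of \eqref{eq:standardMorph} and \eqref{eq:homtensMorph}. The main points to verify are that the internal form of Lemma~\ref{lemma:ScalarExtAdj} is in fact available (which is part of its statement) and that the resulting morphisms are genuinely $L$-linear rather than only $K$-linear; the latter is automatic from the adjunction, since the functor $L_X\otimes_{K_X}(-)$ lands in $L_X$-modules and the adjunction isomorphisms are isomorphisms of $L$-vector spaces after forgetting. The question of whether these natural morphisms are \emph{isomorphisms} is a separate issue, to be treated (under finiteness or $\R$-constructibility hypotheses) in the subsequent results.
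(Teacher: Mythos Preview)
Your argument is correct and is essentially the same as the paper's: both produce the internal morphism by starting from the unit $\eta_\cG$ (equivalently, the identity on $L_X\otimes_{K_X}\cG$), pushing it through $\RHom_{K_X}(\cF,-)$, applying the internal form of Lemma~\ref{lemma:ScalarExtAdj}, and then using the $(\Phi_{L/K},\mathsf{for}_{L/K})$-adjunction once more to obtain an $L$-linear map. The only cosmetic difference is that the paper derives the global $\sHom$-statement from the internal one (via $H^0$, global sections, and the presheaf-to-sheaf map), whereas you construct it directly and independently; both routes give the same morphism.
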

\begin{proof}
	Using Lemma~\ref{lemma:ScalarExtAdj} repeatedly, we have isomorphisms and a morphism
	\begin{align*}
		\sHom&_{\DbL{X}}\big(L_X\otimes_{K_X} \RHom_{K_X}(\cF,\cG),\RHom_{L_X}(L_X\otimes_{K_X}\cF,L_X\otimes_{K_X}\cG)\big)\\
		&\iso \sHom_{\DbK{X}}\big(\RHom_{K_X}(\cF,\cG),\mathsf{for}_{L/K}\RHom_{L_X}(L_X\otimes_{K_X}\cF,L_X\otimes_{K_X}\cG)\big)\\
		&\iso \sHom_{\DbK{X}}\big(\RHom_{K_X}(\cF,\cG),\RHom_{K_X}(\cF,\mathsf{for}_{L/K}(L_X\otimes_{K_X}\cG))\big)\\
		&\leftarrow \sHom_{\DbK{X}}\big(\cG,\mathsf{for}_{L/K}(L_X\otimes_{K_X}\cG)\big)\\
		&\iso \sHom_{\DbL{X}}(L_X\otimes_{K_X}\cG,L_X\otimes_{K_X}\cG)
	\end{align*}
	and the desired morphism is induced by the identity element of the last space.
	
	The second morphism follows then by taking zeroth cohomology (note that extension of scalars is exact) and global sections. To conclude, note that $L_X\otimes_{K_X} \Hom(\cF,\cG)$ is the sheafification of a presheaf with global sections $L\otimes_K \sHom(\cF,\cG)$, and there is a natural morphism from a presheaf to its associated sheaf.
\end{proof}

We now describe when this morphism is an isomorphism. The statement of the following proposition was already given in \cite{BHHS22}, where it was derived from a result about ind-sheaves. In a previous version of this article, we gave a similar, but more direct argument for sheaves, using a description of $\R$-constructible sheaves via simplicial complexes. Here, we will avoid these arguments by using the results of \cite{HS23}.

\begin{prop}[{see \cite[Lemma 2.7]{BHHS22}}]\label{prop:BHHSsheafhom}
	Let $L/K$ be a field extension. Let $X$ be a topological space, and let $\cF,\cG\in\DbK{X}$.
	Consider the following conditions:
	\begin{itemize}
		\item[(a)] $L/K$ is finite.
		\item[(b)] $X$ is a real analytic manifold and $\cF,\cG\in\DbRcK{X}$.
	\end{itemize}
	If (a) or (b) are satisfied, then there is an isomorphism in $\DbL{X}$
	$$\RHom_{L_X}(L_X\otimes_{K_X}\cF,L_X\otimes_{K_X}\cG) \iso L_X\otimes_{K_X} \RHom_{K_X}(\cF,\cG).$$
\end{prop}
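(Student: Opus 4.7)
The plan is to factor the natural $L_X$-linear morphism $\alpha$ produced in Lemma~\ref{lemma:natMorph} through the intermediate object $\RHom_{K_X}(\cF, L_X\otimes_{K_X}\cG)$, writing $\alpha=\gamma\circ\beta$, where
$$\beta\colon L_X\otimes_{K_X}\RHom_{K_X}(\cF,\cG)\To \RHom_{K_X}(\cF, L_X\otimes_{K_X}\cG)$$
is the general tensor--hom morphism \eqref{eq:homtensMorph} with $\cF_3=L_X$, and
$$\gamma\colon \RHom_{K_X}(\cF, L_X\otimes_{K_X}\cG)\To \RHom_{L_X}(L_X\otimes_{K_X}\cF, L_X\otimes_{K_X}\cG)$$
is the $L_X$-linear incarnation of the extension--restriction adjunction. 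This splits the proof into two independent sub-statements, each of which is easier to attack than the original.

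For the first, I would argue that $\gamma$ is an isomorphism with \emph{no} hypothesis on $L/K$, $\cF$ or $\cG$. This is a mild $L_X$-linear sharpening of Lemma~\ref{lemma:ScalarExtAdj}: the proof given there actually produces, for any $\cH\in\DbL{X}$, a natural isomorphism
$$\RHom_{K_X}(\cF,\mathsf{for}_{L/K}\cH)\iso \mathsf{for}_{L/K}\RHom_{L_X}(L_X\otimes_{K_X}\cF,\cH)$$
which is compatible with the $L_X$-action induced by that on $\cH$, so upon suppressing the forgetful functors one obtains an isomorphism of $L_X$-modules; specializing to $\cH=L_X\otimes_{K_X}\cG$ will yield $\gamma$.

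For the second, I would treat the two cases separately. Under (a), a choice of $K$-basis of $L$ identifies $L_X$ with $K_X^n$ as $K_X$-modules ($n=[L:K]$), so $L_X\otimes_{K_X}(-)$ becomes a finite direct sum $(-)^{\oplus n}$, and $\beta$ is immediately an isomorphism because $\RHom_{K_X}$ commutes with finite direct sums in both variables. Under (b), the sheaf $L_X$ is constant (in particular locally constant) on the real analytic manifold $X$ while $\cF,\cG\in\DbRcK{X}$, so $\beta$ is an isomorphism by \cite[Theorem~4.4(b)]{HS23}.

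The main---and really quite mild---obstacle will be to verify that the composition $\gamma\circ\beta$ just constructed actually coincides with the canonical morphism $\alpha$ of Lemma~\ref{lemma:natMorph}. Since both morphisms are defined by tracing the identity through chains of adjunctions, this should reduce to a formal diagram chase; a convenient way to carry it out is to apply the faithful functor $\mathsf{for}_{L/K}$ and identify both morphisms with the adjoint of the canonical map $\cG\to\mathsf{for}_{L/K}(L_X\otimes_{K_X}\cG)$ under the adjunctions used to construct $\alpha$ in the first place.
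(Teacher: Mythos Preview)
Your proposal is correct and follows essentially the same approach as the paper: the paper also reduces, via Lemma~\ref{lemma:ScalarExtAdj} (your $\gamma$), to showing that the tensor--hom morphism $\beta$ is an isomorphism over $K$, and then treats (b) by the very same citation of \cite[Theorem~4.4(b)]{HS23}. The only cosmetic difference is in case~(a), where the paper writes $L_X\otimes_{K_X}(-)\iso\RHom_{K_X}(L_X,-)$ and invokes the swap isomorphism for iterated $\RHom$, whereas you use the direct-sum decomposition $L_X\iso K_X^n$ directly; both arguments exploit finiteness of $[L:K]$ in the same way.
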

\begin{proof}
	There is a natural morphism in $\DbL{X}$ between the two objects by Lemma~\ref{lemma:natMorph}. It suffices to prove that it is an isomorphism over $K$ (i.e.\ after applying $\mathsf{for}_{L/K}$), since the notion of isomorphism of sheaves does not depend on the field (one could check it on the level of sheaves of sets). By Lemma~\ref{lemma:ScalarExtAdj}, it is therefore enough to prove the isomorphism in $\DbK{X}$
	$$\RHom_{K_X}(\cF,L_X\otimes_{K_X}\cG) \iso L_X\otimes_{K_X} \RHom_{K_X}(\cF,\cG).$$
	If $L/K$ is finite, this follows from $L_X\otimes_{K_X}(-)\iso \RHom_{K_X}(L_X,-)$ and the well-known isomorphism
	$$\RHom_{K_X}(\cF,\RHom_{K_X}(L_X,\cG))\iso \RHom_{K_X}(L_X,\RHom_{K_X}(\cF,\cG))$$
	(see \cite[Proposition 2.6.3]{KS90}).
	
	If (b) is satisfied, this follows from \cite[Theorem 4.4(b)]{HS23} (i.e.\ the morphism \eqref{eq:homtensMorph} is an isomorphism if (b) is satisfied).
\end{proof}

With this in hand, we can now state the description of the space of homomorphisms between extensions of scalars.
\begin{prop}\label{prop:morphLification}
	Let $L/K$ be a field extension. Let $X$ be a topological space, and let $\cF,\cG\in\DbK{X}$.
	Consider the following conditions:
	\begin{itemize}
		\item[(a)] $L/K$ is finite.
		\item[(b)] $X$ is a compact real analytic manifold and $\cF,\cG\in\DbRcK{X}$.
	\end{itemize}
	If (a) or (b) is satisfied, then we have an isomorphism of $L$-vector spaces 
	$$\sHom_{\DbL{X}}(L_X\otimes_{K_X}\cF, L_X\otimes_{K_X}\cG) \iso L\otimes_{K}\sHom_{\DbK{X}}(\cF, \cG).$$
\end{prop}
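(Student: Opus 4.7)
My plan is to reduce the $\sHom$ computation to a statement about $\RHom$ plus a global sections (i.e.\ direct image to a point) argument, and then apply the results already proved in the section.

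First, recall that for any two objects $\cF',\cG'$ in a derived category of sheaves of $L$-vector spaces on $X$, one has
$$\sHom_{\DbL{X}}(\cF',\cG') \iso \mathrm{H}^0\RR\Gamma\bigl(X,\RHom_{L_X}(\cF',\cG')\bigr),$$
and the analogous formula holds over $K$. Applying this to $\cF'=L_X\otimes_{K_X}\cF$ and $\cG'=L_X\otimes_{K_X}\cG$, and then invoking Proposition~\ref{prop:BHHSsheafhom} (which is available in both cases~(a) and (b)), I obtain an isomorphism in $\DbL{X}$
$$\RHom_{L_X}(L_X\otimes_{K_X}\cF,L_X\otimes_{K_X}\cG) \iso L_X\otimes_{K_X}\RHom_{K_X}(\cF,\cG).$$
Setting $\cH\vcentcolon=\RHom_{K_X}(\cF,\cG)\in\DbK{X}$, it therefore suffices to establish an isomorphism of $L$-vector spaces
$$\RR\Gamma\bigl(X,L_X\otimes_{K_X}\cH\bigr)\iso L\otimes_K \RR\Gamma(X,\cH)$$
in the derived category of $L$-vector spaces, and then pass to $\mathrm{H}^0$ using the exactness (flatness) of $L\otimes_K-$ on $K$-vector spaces.

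The key point is the commutation of extension of scalars with $\RR\Gamma(X,-)=\RR{p_X}_*$, where $p_X\colon X\to\{\mathrm{pt}\}$. In case~(a), where $L/K$ is finite, this is the last isomorphism of Proposition~\ref{prop:extIm} applied to $p_X$. In case~(b), the assumption that $X$ is compact makes $p_X$ proper, so $\RR{p_X}_*\iso \RR{p_X}_!$, and the projection formula (the first isomorphism of Proposition~\ref{prop:extIm}, which holds without any further assumption) gives
$$L\otimes_K\RR{p_X}_!\cH\iso \RR{p_X}_!(L_X\otimes_{K_X}\cH).$$
Combining these identifications and taking $\mathrm{H}^0$ yields the desired isomorphism
$$\sHom_{\DbL{X}}(L_X\otimes_{K_X}\cF,L_X\otimes_{K_X}\cG)\iso L\otimes_K\sHom_{\DbK{X}}(\cF,\cG),$$
and one checks that this coincides with the natural morphism constructed in Lemma~\ref{lemma:natMorph}.

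The main obstacle is really packaged into the two inputs already proven: Proposition~\ref{prop:BHHSsheafhom} (which reduces the constructibility issue in case~(b) to the results of \cite{HS23}) and the commutation with $\RR\Gamma$. The essential reason that compactness is needed in case~(b), rather than just $\R$-constructibility, is precisely the failure exhibited in Remark~\ref{rem:directImage}: without properness, one cannot replace $\RR{p_X}_*$ by $\RR{p_X}_!$ and the projection formula is no longer available. Everything else is formal manipulation of adjunctions and of the standard six-functor compatibilities recalled in Section~\ref{sec:Hom}.
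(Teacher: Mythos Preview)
Your proof is correct and follows essentially the same approach as the paper: apply Proposition~\ref{prop:BHHSsheafhom} to reduce to showing $\RR{p_X}_*$ commutes with extension of scalars, then use Proposition~\ref{prop:extIm} directly in case~(a) and the identification $\RR{p_X}_*\iso\RR{p_X}_!$ plus the projection formula in case~(b). The paper additionally remarks that $\RHom_{K_X}(\cF,\cG)$ is $\R$-constructible in case~(b) (citing \cite[Proposition~8.4.10]{KS90}), but as you correctly observe, the projection formula for $\RR f_!$ holds without any constructibility hypothesis, so this is not strictly needed for the argument.
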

\begin{proof}
	A natural $L$-linear morphism exists by Lemma~\ref{lemma:natMorph}. The isomorphism is obtained from Proposition~\ref{prop:morphLification} by taking (derived) global sections and zeroth cohomology. The functor of derived global sections is a direct image functor $\RR {p_X}_*$ along the map to a one-point space, and we need to see that $\RR {p_X}_*(L_X\otimes_{K_X}(-))\iso L\otimes_K \RR {p_X}_*(-)$. If $L/K$ is finite, this holds by \eqref{eq:standardCompat} since $L_X\otimes_{K_X}(-)\iso \RHom_{K_X}(L_X,-)$. If $X$ is compact, we have $\RR {p_X}_* \iso \RR {p_X}_!$ and hence it follows from Proposition~\ref{prop:extIm} (noting that $\RHom_{K_X}(\cF,\cG)$ is $\R$-constructible if $\cF$ and $\cG$ are, see \cite[Proposition~8.4.10]{KS90}).
\end{proof}

\begin{rem}
	In view of \cite[Theorem~4.7]{HS23}, we can generalize condition (b) in Proposition~\ref{prop:morphLification} as follows, using the notion of b-constructible sheaves from \cite{Sch23}:
	\begin{itemize}
		\item[(b')] $(X,\widehat{X})$ is a b-analytic manifold and $\cF,\cG\in\DbRcK{X_\infty}$.
	\end{itemize}
	Then, by Proposition~\ref{prop:extIm}, the assertion of the above proposition holds if (b') is satisfied, and (b) is the special case of (b') where $X$ is a compact real analytic manifold and $\widehat{X}=X$.
\end{rem}

Compared to general field extensions, the case of a (finite) Galois extension has an additional feature, the Galois group, which in a certain sense ``knows enough'' about the field extension. Using it, we can describe more intrinsically the image of morphisms under extension of scalars.

\begin{prop}\label{prop:morphLificationFin}
	Let $L/K$ be a finite Galois extension. Let $X$ be a topological space, and let $\cF,\cG\in\DbK{X}$.
	Then the subspace of
	\begin{equation*}
		\sHom_{\DbL{X}}(L_X\otimes_{K_X}\cF, L_X\otimes_{K_X}\cG) \iso L\otimes_{K}\sHom_{\DbK{X}}(\cF, \cG).
	\end{equation*}
	consisting of morphisms $f$ which fit into the natural diagram
	$$\begin{tikzcd}
		L_X\otimes_{K_X}\cF \arrow{r}{f} \arrow{d}{g\otimes\id_{\cF}}[swap]{\iso} & L_X\otimes_{K_X}\cG \arrow{d}{g\otimes\id_\cG}[swap]{\iso}\\
		\overline{L_X}^g\otimes_{K_X}\cF \arrow{r}{\overline{f}^g} &\overline{L_X}^g\otimes_{K_X}\cG
	\end{tikzcd}$$
	for any $g\in G$ is exactly the subset of morphisms of the form $1\otimes f_K$ for some $f_K\in\sHom_{\DbK{X}}(\cF,\cG)$.
\end{prop}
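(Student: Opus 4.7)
The plan is to reduce the statement to classical finite Galois descent applied to the $K$-vector space $V \vcentcolon= \sHom_{\DbK{X}}(\cF,\cG)$. Since $L/K$ is finite, Proposition~\ref{prop:morphLification} (under hypothesis~(a)) provides an explicit $L$-linear isomorphism
\[
\Psi\colon L\otimes_K V \overset{\sim}{\To} \sHom_{\DbL{X}}(L_X\otimes_{K_X}\cF,\, L_X\otimes_{K_X}\cG),
\]
under which a pure tensor $\ell\otimes f_K$ corresponds to the morphism acting as multiplication by $\ell$ on the $L_X$-factor and as $f_K$ on the sheaf factor.

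For the easy inclusion, I check directly that every $\Psi(1\otimes f_K) = \id_{L_X}\otimes f_K$ makes the given diagram commute: both compositions send $\ell\otimes w$ to $g(\ell)\otimes f_K(w)$, so the square commutes for every $g\in G$.

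For the converse, I reformulate the diagram condition as $G$-invariance. The assignment $\rho_g(f)\vcentcolon= (g\otimes \id_\cG)^{-1}\circ \overline{f}^g\circ (g\otimes \id_\cF)$ defines a $G$-action on the right-hand side of $\Psi$, and the diagram of the proposition commutes for all $g\in G$ if and only if $f$ is fixed by this action. A direct unwinding on pure tensors shows that $\Psi^{-1}\circ\rho_g\circ\Psi$ sends $\ell\otimes f_K$ to $g^{-1}(\ell)\otimes f_K$; hence, transported through $\Psi$, the induced $G$-action on $L\otimes_K V$ is the standard one, acting on the left factor $L$ with trivial action on $V$.

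The conclusion then follows from classical finite Galois descent recalled at the end of Subsection~\ref{subsec:Galois}: since $L^G = K$, the $G$-invariants in $L\otimes_K V$ (for this action) are exactly $K\otimes_K V\iso 1\otimes V$. The main obstacle is the identification in the previous paragraph, which amounts to unwinding the explicit form of $\Psi$ together with the canonical $G$-structures on $L_X\otimes_{K_X}\cF$ and $L_X\otimes_{K_X}\cG$ from Example~\ref{ex:Gstructures}; once verified, the rest is immediate from classical Galois descent for vector spaces.
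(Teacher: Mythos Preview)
Your proposal is correct and follows essentially the same approach as the paper's proof: both reduce the diagram condition to $G$-invariance in $L\otimes_K V$ (with $V=\sHom_{\DbK{X}}(\cF,\cG)$) under the standard Galois action on the $L$-factor, and then invoke the classical fact that $(L\otimes_K V)^G = 1\otimes V$ (the paper cites this as \cite[Corollary~2.17]{Conrad}). The only cosmetic difference is that the paper fixes a $K$-basis $\ell_1,\ldots,\ell_d$ of $L$ and writes out the computation in coordinates, whereas you package the same computation as an abstract $G$-action $\rho_g$ and check it on pure tensors.
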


\begin{proof}
	Let $\cF,\cG\in\DbK{X}$. Choose a $K$-basis $\ell_1,\ldots,\ell_d$ of $L$. By Proposition~\ref{prop:morphLification}, an element of $\sHom_{\DbL{X}}(\cF\otimes_{K_X}L_X, \cG\otimes_{K_X}L_X)$ can uniquely be written in the form $f=\sum_{k=1}^{d} \ell_k\otimes f_k$ for some $f_k\in \sHom_{\DbK{X}}(\cF,\cG)$ (where $\ell_k$ is understood as the map $L_X\to L_X$ given by multiplication with $\ell_k$).
	Assume that $f$ fits in a diagram as above for any $g\in G$. The vertical isomorphisms are induced by the natural $G$-structure on the constant sheaf $L_X$. Commutation of this diagram therefore means that $\sum_{k=1}^{d} g(\ell_k\cdot (-))\otimes f_k = \sum_{k=1}^{d} (g(\ell_k)\cdot g(-)) \otimes f_k$ (going right and then down in the diagram) and $\sum_{k=1}^{d} (\ell_k\cdot g(-)) \otimes f_k$ (going down and then right) coincide as morphisms in $\sHom_{\DbL{X}}(L_X\otimes_{K_X}\cF,\overline{L_X}^g\otimes_{K_X}\cG)$. Now there is an isomorphism 
	\begin{align*}
		\sHom_{\DbL{X}}(L_X\otimes_{K_X}\cF,\overline{L_X}^g\otimes_{K_X}\cG)&\iso \sHom_{\DbL{X}}(L_X\otimes_{K_X}\cF,L_X\otimes_{K_X}\cG)\\
		&\iso L\otimes_K \sHom_{\DbK{X}}(\cF,\cG)
	\end{align*} 
	with the first isomorphism given by $\varphi\mapsto (g^{-1}\otimes \id)\circ\varphi$, hence the above means that for any $g\in G$ we have $\sum_{k=1}^{d} \ell_k \otimes f_k= \sum_{k=1}^{d} g(\ell_k) \otimes f_k$ in $L\otimes_K \sHom_{\DbK{X}}(\cF,\cG)$. Then the result follows from the fact that the invariants of the Galois action on $L\otimes_K V$ (for a $K$-vector space $V$) are given by the $K$-subspace $1\otimes V$ (cf. e.g. \cite[Corollary 2.17]{Conrad}).
\end{proof}

Altogether, we have in particular proved the following property.
\begin{prop}
	Let $L/K$ be a field extension and $X$ a topological space.
	\begin{itemize}
		\item[(a)] If $L/K$ is finite, the functor $\Phi_{L/K}=L_X\otimes_{K_X}(-)$ is faithful on $\DbK{X}$.
		\item[(b)] If $X$ is a compact real analytic manifold, then $\Phi_{L/K}$ is faithful on $\DbRcK{X}$.
	\end{itemize}
\end{prop}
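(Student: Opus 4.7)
The plan is to deduce both statements as immediate consequences of Proposition~\ref{prop:morphLification}. In case (a) the hypotheses of that proposition hold for any $\cF,\cG\in\DbK{X}$, and in case (b) they hold for any $\cF,\cG\in\DbRcK{X}$. In either situation we therefore have a natural $L$-linear isomorphism
$$\sHom_{\DbL{X}}(L_X\otimes_{K_X}\cF, L_X\otimes_{K_X}\cG) \iso L\otimes_{K}\sHom_{\DbK{X}}(\cF, \cG),$$
and the whole task reduces to identifying, and then analyzing, the map induced on morphism spaces by the functor $\Phi_{L/K}$.

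First, I would check that under this isomorphism, the map $\Phi_{L/K}\colon \sHom_{\DbK{X}}(\cF,\cG)\to \sHom_{\DbL{X}}(L_X\otimes_{K_X}\cF, L_X\otimes_{K_X}\cG)$ corresponds precisely to the natural $K$-linear inclusion $f_K\longmapsto 1\otimes f_K$ into $L\otimes_K \sHom_{\DbK{X}}(\cF,\cG)$. This is a routine unwinding of how the natural morphism in Lemma~\ref{lemma:natMorph} was built: it was obtained by adjunction from the identity on $L_X\otimes_{K_X}\cG$, and chasing this construction shows that a morphism $f_K\colon\cF\to\cG$ is sent to $\id_{L_X}\otimes f_K$, which under the comparison isomorphism of Proposition~\ref{prop:morphLification} is the element $1\otimes f_K$.

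Once this identification is in place, faithfulness becomes a purely linear-algebra statement: I need $f_K\mapsto 1\otimes f_K$ to be injective on the $K$-vector space $V\vcentcolon=\sHom_{\DbK{X}}(\cF,\cG)$. But $L$ is a nonzero $K$-vector space and admits a $K$-basis containing the element $1$, so the map $V\to L\otimes_K V$, $v\mapsto 1\otimes v$, is the inclusion of a direct summand; equivalently, $L$ is a free, hence faithfully flat, $K$-module. Therefore $\Phi_{L/K}(f_K)=0$ forces $f_K=0$, proving both (a) and (b).

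The only point that requires any care is the identification in the second paragraph; the rest is immediate. Since Proposition~\ref{prop:morphLification} is already established under each of the two hypotheses, no further technical obstacle arises, and there is no need to distinguish the two cases beyond invoking the appropriate branch of that proposition.
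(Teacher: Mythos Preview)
Your proof is correct and follows essentially the same approach as the paper, which simply states that both parts follow directly from Proposition~\ref{prop:morphLification}. You have spelled out the implicit step---identifying the map on morphism spaces with $f_K\mapsto 1\otimes f_K$ and noting its injectivity---which the paper leaves to the reader.
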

\begin{proof}
	These statements follow directly from Proposition~\ref{prop:morphLification} (a) and (b), respectively.
\end{proof}

Finally, we can use our findings of this subsection to establish a compatibility of extension of scalars with the duality functor. 
\begin{corr}[of Proposition~\ref{prop:BHHSsheafhom}]\label{cor:dualityExt}
	Let $L/K$ be a field extension. Let $X$ be a topological space, and let $\cF\in\DbK{X}$.
	If $L/K$ is finite, or if $X$ is a real analytic manifold and $\cF\in\DbRcK{X}$, then there is an isomorphism in $\DbL{X}$ 
		$$\DD_X^L(L_X\otimes_{K_X}\cF)\iso L_X\otimes_{K_X} \DD^K_X\cF.$$
\end{corr}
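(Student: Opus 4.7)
The plan is to unravel the definition of the duality functor and then chain together the two main compatibility results already established in this section, namely Proposition~\ref{prop:extIm} (for the exceptional inverse image $p_X^!$ along $p_X\colon X\to\{\mathrm{pt}\}$) and Proposition~\ref{prop:BHHSsheafhom} (for $\RHom$).

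More precisely, writing $\omega_X^k = p_X^! k$ for the dualizing complex over the field $k$, I would start from
\begin{equation*}
\DD_X^L(L_X\otimes_{K_X}\cF) = \RHom_{L_X}(L_X\otimes_{K_X}\cF,\omega_X^L)
\end{equation*}
and observe that by Proposition~\ref{prop:extIm} applied to the map $p_X\colon X\to\{\mathrm{pt}\}$ and the sheaf $K\in\DbK{\mathrm{pt}}$, there is a canonical isomorphism $\omega_X^L \iso L_X\otimes_{K_X}\omega_X^K$. Indeed, in case (a) this uses the finiteness hypothesis on $L/K$, and in case (b) one notes that $\{\mathrm{pt}\}$ is trivially a real analytic manifold and $K\in\DbRcK{\mathrm{pt}}$, so the $\R$-constructible hypothesis of Proposition~\ref{prop:extIm} is satisfied.

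Then I would substitute this into the above and apply Proposition~\ref{prop:BHHSsheafhom} with $\cG\vcentcolon=\omega_X^K$, obtaining
\begin{equation*}
\RHom_{L_X}(L_X\otimes_{K_X}\cF,L_X\otimes_{K_X}\omega_X^K) \iso L_X\otimes_{K_X} \RHom_{K_X}(\cF,\omega_X^K) = L_X\otimes_{K_X}\DD_X^K\cF,
\end{equation*}
which yields the desired isomorphism. To apply Proposition~\ref{prop:BHHSsheafhom} in case (b), I need both $\cF$ and $\omega_X^K$ to be $\R$-constructible; $\cF$ is so by hypothesis, and $\omega_X^K$ is $\R$-constructible since on a real analytic manifold it is (up to a shift by $\dim X$) the orientation sheaf, which is locally constant of rank one. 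In case (a) there is nothing more to check.

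The step that required the most care was the identification $\omega_X^L\iso L_X\otimes_{K_X}\omega_X^K$; this is not automatic for arbitrary field extensions without some hypothesis (cf. Remark~\ref{rem:directImage} for a similar pitfall with $\RR f_*$), but here the relevant sheaf on the target lives on a point and is therefore trivially $\R$-constructible, so the case (b) of Proposition~\ref{prop:extIm} applies without additional assumption on $X$ beyond being a real analytic manifold. The rest is a formal application of the results already established.
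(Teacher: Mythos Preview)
Your proof is correct and follows essentially the same route as the paper: identify $\omega_X^L\iso L_X\otimes_{K_X}\omega_X^K$, then apply Proposition~\ref{prop:BHHSsheafhom} with $\cG=\omega_X^K$. The only minor difference is that the paper obtains the dualizing-complex identification by citing \cite[Proposition~3.3.4]{KS90} directly, whereas you derive it from Proposition~\ref{prop:extIm} applied to $p_X$; your extra check that $\omega_X^K$ is $\R$-constructible in case~(b) is a detail the paper leaves implicit.
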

\begin{proof}
	It follows from \cite[Proposition 3.3.4]{KS90} that we have the relation $\omega_X^L\iso L_X\otimes_{K_X}\omega_X^K$ between the dualizing complexes in $\DbK{X}$ and $\DbL{X}$. Then we get
		\begin{align*}
			\DD_X^L(L_X\otimes_{K_X}\cF) &= \RHom_{L_X}(L_X\otimes_{K_X}\cF,\omega_X^L)\\
			&\iso \RHom_{L_X}(L_X\otimes_{K_X}\cF,L_X\otimes_{K_X} \omega^K_X)\\
			&\iso L_X\otimes_{K_X} \RHom_{K_X}(\cF,\omega^K_X)=L_X\otimes_{K_X} \DD_X^K\cF.
		\end{align*}
		Here, we have applied Proposition~\ref{prop:BHHSsheafhom} in the third line.
\end{proof}

\subsection{$K$-structures and constructibility}

Let $L/K$ be a field extension, and let $X$ be a topological space.

It is not difficult to see that extension of scalars preserves (local) constancy, constructibility and perversity, and that these properties descend to $K$-lattices. We will state these properties here.
\begin{lemma}\label{lemma:latticeLocSys}
	Let $X$ be a topological space. If $\cG\in\Mod{K_X}$, then $\cG$ is locally constant of finite rank if and only if $\cF\vcentcolon=L_X\otimes_{K_X}\cG\in\Mod{L_X}$ is locally constant of finite rank.
\end{lemma}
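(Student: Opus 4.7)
The plan is to handle the two implications separately.

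For $(\Rightarrow)$: if $\cG|_U \iso V_U$ for some $V \in \Vect{K}$, then since restriction to $U$ commutes with the tensor product and since $L_U \otimes_{K_U} V_U$ is, by the remark on sheafification in Section~\ref{sec:Sheaves}, the sheafification of the constant presheaf with value $L \otimes_K V$, we get $(L_X \otimes_{K_X} \cG)|_U \iso (L \otimes_K V)_U$, which is constant.

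For $(\Leftarrow)$, the key step is to realise $\cG$ as a direct summand of $\mathsf{for}_{L/K}(\cF)$ in $\Mod{K_X}$. Extending $\{1\}$ to a $K$-basis of $L$, one obtains a $K$-linear projection $\rho \colon L \to K$ with $\rho(1) = 1$. Then $\rho_X \otimes \id_{\cG} \colon L_X \otimes_{K_X} \cG \to \cG$ is a $K_X$-linear retraction of the canonical inclusion $\cG \hookrightarrow L_X \otimes_{K_X} \cG$, $g \mapsto 1 \otimes g$, so $\cG$ is a $K_X$-direct summand of $\mathsf{for}_{L/K}(\cF)$.

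Assume now that $\cF$ is locally constant. Applying $\mathsf{for}_{L/K}$ to a local trivialisation $\cF|_U \iso V_U$ yields $\mathsf{for}_{L/K}(\cF)|_U \iso V_U$ in $\Mod{K_U}$, with $V$ now regarded as a $K$-vector space. Shrinking $U$ to a connected neighbourhood of the chosen point, $\cG|_U$ is identified with the image of some $K_U$-linear idempotent $e$ on $V_U$. By the adjunction $p_U^{-1} \dashv p_{U*}$ and the fact that $\Gamma(U, V_U) = V$ on a connected $U$, we have $\sHom_{K_U}(V_U, V_U) \iso \sHom_K(V, V)$; hence $e$ is induced by an idempotent of $V$, giving a splitting $V = W \oplus W'$ and a corresponding sheaf splitting $V_U = W_U \oplus W'_U$. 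Therefore $\cG|_U \iso W_U$ is a constant sheaf, and $\cG$ is locally constant.

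I expect the main technical subtlety to be this last step---the identification of idempotents on the sheaf $V_U$ with idempotents on $V$---which requires $U$ to be connected. This is automatic on spaces with a basis of connected neighbourhoods (such as manifolds, the main setting of this paper); on more exotic ``good'' spaces one argues componentwise on each connected component of a trivialising neighbourhood of the given point.
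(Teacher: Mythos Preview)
Your forward direction agrees with the paper's. For the converse you take a genuinely different route. The paper uses a pointwise characterisation of local systems --- each point has a neighbourhood $U$ with $\cF(U)\to\cF_x$ surjective and compatible stalk isomorphisms $\cF_x\overset{\sim}{\to}\cF_y$ for all $y\in U$ --- and transports this property from $\cF$ to $\cG$ via the identification $\cF_x\iso L\otimes_K\cG_x$ and faithful flatness of $L$ over $K$. Your approach instead splits $\cG$ off as a $K_X$-summand of $\mathsf{for}_{L/K}(\cF)$ using a $K$-linear retraction $\rho\colon L\to K$, then shows that on a connected trivialising neighbourhood every idempotent of $V_U$ arises from one of $V$. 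This is clean and reduces the question to pure linear algebra; it is fully correct on locally connected spaces, hence in particular for the manifolds the paper really cares about.

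Your caveat about connectedness is well taken, but the suggested fix (``argue componentwise on each connected component'') does not actually repair the argument on arbitrary good spaces: on a totally disconnected space the components are singletons, and direct summands of constant sheaves need not be locally constant there --- one can produce idempotents of $V_X$ whose image has stalk dimension varying from point to point. The paper's stalk-based argument sidesteps this issue by exploiting the specific form $\cF\iso L_X\otimes_{K_X}\cG$ rather than the weaker fact that $\cG$ is merely a summand. Since all applications in the paper are to manifolds, this is a cosmetic rather than a substantive gap in your argument.
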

\begin{proof}
		If $\cG$ is locally constant, it is easily deduced from Proposition~\ref{prop:extIm} that $\cF$ is so, since constant sheaves are inverse images of vector spaces along the map to the one-point space. (Let us remark that for this direction it is not even necessary that $\cG$ is of finite rank.)
		
		For the other direction, we reproduce the proof in \cite[Proof of Lemma~2.13]{BHHS22} slightly differently here for completeness.
		
		A local system is characterized by the fact that each $x\in X$ has an open neighbourhood $U\subseteq X$ such that $\cF(U)\to \cF_x$ is surjective and such that for any $y\in U$ there is an isomorphism $\cF_x\overset{\sim}{\to}\cF_y$ such that the diagram
		$$\begin{tikzcd}
			&\cF(U)\arrow[two heads]{ld} \arrow{rd}\\
			\cF_x \arrow{rr}{\sim} & & \cF_y
		\end{tikzcd}$$
		commutes. (This is a nice exercise in basic sheaf theory.) We are given this property for $\cF$ and want to prove the analogous property for $\cG$.
		This follows from the commutative diagram
		$$\begin{tikzcd}
			&&L\otimes_K \cG(U)\arrow[bend right]{llddd} \arrow[bend left]{rrddd}\arrow{d}\\
			&&\cF(U)\arrow{rd} \arrow[two heads]{ld}\\
			&\cF_x \arrow{rr}{\sim} & & \cF_y\\
			L\otimes_K \cG_x \arrow{ru}{\sim} \arrow[dashed]{rrrr}{} & & & & L\otimes_K \cG_y.\arrow{ul}[swap]{\sim}
		\end{tikzcd}$$
		To see this, observe the following: Since $\cF_x$ is a finite-dimensional vector space (over $L$), so is $\cG_x$ (over $K$), and hence (by possibly shrinking $U$, but this does not affect the surjectivity of $\cF(U)\to \cF_x$) the restriction map $\cG(U)\to \cG_x$ is surjective because each of the (finitely many) generators of $\cG_x$ is induced by a section on a neighbourhood of $x$. Therefore, the dashed morphism descends to a morphism $\cG_x\to\cG_y$: It is induced by the map taking an element of $\cG_x$ and mapping a preimage in $\cG(U)$ of it to $\cG_y$ via the restriction map $\cG(U)\to\cG_y$. Then, we remark that this map $\cG_x\to \cG_y$ is an isomorphism since the dashed arrow is so and tensor products over fields are exact.
\end{proof}

\begin{rem}
	Let us briefly reflect on the case of local systems with possibly infinite rank. Let $\cG\in \Mod{K_X}$.
	
	As already remarked in the proof of the above lemma, the local constancy of $L_X\otimes_{K_X} \cG$ follows from that of $\cG$ even without the hypothesis that the rank of $\cG$ is finite.
	
	For the other direction, we can prove the following result: If $L/K$ is a finite field extension and $\cF\vcentcolon= L_X\otimes_{K_X}\cG$ is locally constant, then $\cG$ is locally constant.
	
	The proof is mostly analogous to the one above. The only argument that needs to be changed (because it explicitly used the finite rank property) is the one for the surjectivity of the morphism $\cG(U)\to \cG_x$. But in the case where $L/K$ is finite, we have an isomorphism $L\otimes_K \cG(U)\iso \cF(U)$, which follows from Proposition~\ref{prop:extIm} since the functor $\Gamma(U;-)$ taking sections on $U$ is nothing but $H^0 \mathrm{R}{p_U}_* j_U^{-1}$, where $j_U\colon U\hookrightarrow X$ is the inclusion and $p_U\colon U\to \{\mathrm{pt}\}$ is the map to the one-point space. Consequently, the surjectivity of $\cG(U)\to\cG_x$ follows from that of $\cF(U)\to\cF_x$ and the exactness of tensor products over fields. The rest of the proof is analogous.
\end{rem}

\begin{corr}
	Let $X$ be a real analytic manifold, let $\cG\in\DbK{X}$ and set $\cF\vcentcolon= L_X\otimes_{K_X} \cG\in\DbL{X}$.
	\begin{itemize}
		\item[(a)] We have $\cF\in\DbRcL{X}$ if and only if $\cG\in \DbRcK{X}$. Accordingly, if $X$ is a complex manifold, then we have $\cF\in\DbCcL{X}$ if and only if $\cG\in\DbCcK{X}$.
		\item[(b)] If $X$ is a complex manifold, then $\cF\in\PervL{X}$ if and only if $\cG\in\PervK{X}$.
	\end{itemize}
\end{corr}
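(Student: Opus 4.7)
The plan is to use the exactness of extension of scalars, which gives $\mathrm{H}^i(L_X\otimes_{K_X}\cG)\iso L_X\otimes_{K_X}\mathrm{H}^i(\cG)$, to reduce each statement to the level of individual cohomology sheaves. The remaining ingredients are Lemma~\ref{lemma:latticeLocSys} (local constancy descends and ascends), Proposition~\ref{prop:extIm} (compatibility with inverse images, so extension of scalars commutes with restriction to a subspace and with taking stalks), Corollary~\ref{cor:dualityExt} (compatibility with duality for $\R$-constructible complexes), and the elementary fact that $L/K$ is faithfully flat, so $\dim_L(L\otimes_K V)=\dim_K V$ and $L\otimes_K V=0\iff V=0$.

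For part~(a), I would first note that being $\R$-constructible (respectively $\C$-constructible) is by definition a condition on each cohomology sheaf, so by exactness of $L_X\otimes_{K_X}(-)$ we may assume $\cG\in\Mod{K_X}$. For the ``only if'' direction, a locally finite subanalytic (resp.\ $\C$-analytic) covering $X=\bigcup X_\alpha$ trivialising $\cG$ also trivialises $\cF$: by Proposition~\ref{prop:extIm}, $\cF|_{X_\alpha}\iso L_{X_\alpha}\otimes_{K_{X_\alpha}}(\cG|_{X_\alpha})$, which is locally constant by Lemma~\ref{lemma:latticeLocSys}, and its rank (as a local system of $L$-vector spaces) equals the $K$-rank of $\cG|_{X_\alpha}$ on any stalk, hence is finite. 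Conversely, given such a covering trivialising $\cF$, Lemma~\ref{lemma:latticeLocSys} gives that $\cG|_{X_\alpha}$ is locally constant, and finiteness of $\dim_L \cF_x=\dim_K \cG_x$ transports back to $\cG$.

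For part~(b), part~(a) ensures that $\cF\in\DbCcL{X}\iff \cG\in\DbCcK{X}$, so what remains is to match the two perverse inequalities. Because stalks commute with extension of scalars and $L$ is faithfully flat over $K$, $\mathrm{H}^{-i}(\cF)_x=L\otimes_K\mathrm{H}^{-i}(\cG)_x$ vanishes iff $\mathrm{H}^{-i}(\cG)_x$ does, so $\supp\mathrm{H}^{-i}(\cF)=\supp\mathrm{H}^{-i}(\cG)$ and the support condition is equivalent on both sides. For the dual support condition, I would invoke Corollary~\ref{cor:dualityExt} (applicable since $\cG$ is now known to be $\C$-constructible, in particular $\R$-constructible) to get $\DD_X^L\cF\iso L_X\otimes_{K_X}\DD_X^K\cG$, after which the same stalk argument applies to the dual. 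This gives the equivalence $\cF\in\PervL{X}\iff \cG\in\PervK{X}$.

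There is no real obstacle here; the only point worth watching is the finite-rank clause in the definition of constructibility, where one must use $\dim_L(L\otimes_K V)=\dim_K V$ rather than a naive $K$-dimension count, and the application of Corollary~\ref{cor:dualityExt} in~(b), which presupposes the $\R$-constructibility established in~(a).
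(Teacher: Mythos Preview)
Your argument is correct and matches the paper's approach: for (a) the paper likewise reduces to cohomology sheaves and invokes Lemma~\ref{lemma:latticeLocSys} on strata, and for (b) it uses exactness, faithfulness on stalks, and Corollary~\ref{cor:dualityExt} exactly as you do. The paper additionally offers a stratification-free alternative for (a), observing (via \cite[Proposition~4.12]{HS23}) that $\msupp(\cF)=\msupp(\cG)$, so constructibility transfers immediately through the microsupport characterization; your argument has the small advantage of making the finite-rank clause explicit. (Minor quibble: your labels ``only if'' and ``conversely'' are swapped relative to the statement as written, but the mathematics is unaffected.)
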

\begin{proof}
	\begin{itemize}
		\item[(a)] can be derived directly from Lemma~\ref{lemma:latticeLocSys}, since restriction to strata and cohomology commute with extension of scalars.
		
		Let us, however, give a more intrinsic argument without choosing a stratification: By \cite[Proposition~4.12]{HS23}, the microsupports of $\cF$ and $\cG$ are the same. We know that being $\R$- or $\C$-constructible depends only on the microsupport, and hence this condition is satisfied for $\cF$ if and only if it is for $\cG$.
		\item[(b)] Since extension of scalars is exact, commutes with taking stalks and is faithful, the supports of the cohomologies of $\cF$ are the same as those for $\cG$ (and the same holds for $\DD_X\cF$ and $\DD_X\cG$ in view of Corollary~\ref{cor:dualityExt}). Hence the support condition for $\cG$ and $\DD_X\cG$ holds if and only if it holds for $\cF$ and $\DD_X\cF$.
	\end{itemize}
\end{proof}

\section{Galois descent for sheaves and their complexes}\label{sec:GaloisDescent}

Let $L/K$ be a finite Galois extension with Galois group $G$. In this section, we will first formulate Galois descent for sheaves of vector spaces. Afterwards, we are going to investigate a similar procedure for derived categories of sheaves of vector spaces, and we will describe what kind of problems arise there and prevent us from obtaining an equally nice result. Finally, we restrict ourselves to a particularly nice subcategory of the derived category of sheaves of vector spaces, namely that of perverse sheaves, and we establish Galois descent for them, using a construction performed by A.\ Beilinson in \cite{Bei}.

\subsection{Galois descent for sheaves of vector spaces}

Galois descent for sheaves of vector spaces has already been studied in \cite{BHHS22}, but the statement was not formulated as an equivalence of categories in loc.~cit. We reformulate it here to fit into the framework set up in Section~\ref{subsec:Galois}, using our results from Section~\ref{sec:Hom}.

\begin{prop}[{cf.\ \cite[Lemma 2.13]{BHHS22}}]\label{prop:GaloisDescentSheaves}
	Let $L/K$ be a finite Galois extension with Galois group $G$, and let $X$ be a topological space.
	
	For any $\cF\in\Mod{L_X}$ equipped with a $G$-structure $(\varphi_g)_{g\in G}$, there exists a $K$-structure $\cG\in\Mod{K_X}$ together with a natural isomorphism $\psi\colon L_X\otimes_{K_X} \cG\overset{\sim}{\to} \cF$ such that the natural $G$-structure on $L_X\otimes_{K_X}\cG$ corresponds via this isomorphism to the given one on $\cF$, i.e.\ such that, for any $g\in G$, the following diagram commutes:
	$$\begin{tikzcd}
		\cF\arrow{rr}{\varphi_g} && \overline{\cF}^g\\ \\
		L_X\otimes_{K_X}\cG\arrow{uu}{\psi}\arrow{rr}{g\otimes\id_\cG} && \overline{L_X}^g\otimes_{K_X}\cG\arrow{uu}{\overline{\psi}^g}
	\end{tikzcd}$$
	In particular, the functor $\Phi_{L/K}=L_X\otimes_{K_X}(-)$ induces an equivalence
	$$\Phi_{L/K}^G\colon \Mod{K_X}\To \Mod{L_X}^G.$$
\end{prop}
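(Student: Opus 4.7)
The plan is to imitate the classical Galois descent construction for vector spaces recalled in Section~\ref{subsec:Galois}, executed sheafwise: build $\cG$ as a subsheaf of $G$-invariants inside $\mathsf{for}_{L/K}(\cF)$ and reduce everything to the vector-space case by passing to stalks. Concretely, given $\cF\in\Mod{L_X}$ with $G$-structure $(\varphi_g)_{g\in G}$, I would define
$$\cG \vcentcolon= \ker\Bigl( \prod_{g\in G}\bigl(\varphi_g^K - \id\bigr) \colon \mathsf{for}_{L/K}(\cF) \to \prod_{g\in G} \mathsf{for}_{L/K}(\cF) \Bigr)$$
in $\Mod{K_X}$, where $\varphi_g^K \vcentcolon= \mathsf{for}_{L/K}(\varphi_g)$ is a well-defined endomorphism of $\mathsf{for}_{L/K}(\cF)$ thanks to the equality $(\overline{\cF}^g)^K = \cF^K$. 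Under the adjunction of Lemma~\ref{lemma:ScalarExtAdj}, the inclusion $\cG \hookrightarrow \mathsf{for}_{L/K}(\cF)$ corresponds to an $L_X$-linear morphism $\psi\colon L_X\otimes_{K_X}\cG \to \cF$, which is the candidate isomorphism.

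To prove that $\psi$ is an isomorphism, I would pass to stalks. Since the stalk functor is exact and commutes with finite products, $\cG_x$ identifies with the $K$-subspace $(\cF_x)_K$ of $G$-invariants of $\cF_x$ for the induced $G$-structure $((\varphi_g)_x)_{g\in G}$, and $\psi_x$ becomes the classical Galois-descent map $L\otimes_K(\cF_x)_K \to \cF_x$, which is an isomorphism by the vector-space case. Commutativity of the diagram comparing $G$-structures is then immediate from the construction: sections of $\cG$ are fixed by every $\varphi_g^K$, and the natural $G$-structure on $L_X\otimes_{K_X}\cG$ acts only through the $L_X$ factor, so tracking $\ell\otimes s$ both ways around the square yields $\varphi_g(\ell)\cdot\psi(1\otimes s)$ in each case.

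For the asserted equivalence $\Phi_{L/K}^G\colon \Mod{K_X} \overset{\sim}{\to} \Mod{L_X}^G$, essential surjectivity is precisely the statement just proved. Faithfulness follows from Proposition~\ref{prop:morphLification}(a), and fullness comes from Proposition~\ref{prop:morphLificationFin}: a morphism $L_X\otimes_{K_X}\cG_1 \to L_X\otimes_{K_X}\cG_2$ compatible with the canonical $G$-structures is necessarily of the form $1\otimes f$ for a unique $f\in\sHom_{K_X}(\cG_1,\cG_2)$, so every morphism in $\Mod{L_X}^G$ between extensions of scalars descends to a morphism in $\Mod{K_X}$. The only delicate point in the whole argument is the stalkwise identification of $\cG_x$ with the $G$-invariants of $\cF_x$; once that is secured, the rest reduces to the classical vector-space Galois descent and to the homomorphism results already established in Section~\ref{sec:Hom}.
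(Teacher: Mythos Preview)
Your proposal is correct and follows essentially the same route as the paper: define $\cG$ as the kernel of $\prod_{g}(\varphi_g^K-\id)$ inside $\cF^K$, obtain $\psi$ from the adjunction of Lemma~\ref{lemma:ScalarExtAdj}, and check it is an isomorphism stalkwise by reducing to classical Galois descent for vector spaces. Your treatment of full faithfulness, invoking both Proposition~\ref{prop:morphLification} and Proposition~\ref{prop:morphLificationFin} separately, is in fact slightly more explicit than the paper's one-line reference.
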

\begin{proof}
	We proceed analogously to the construction for vector spaces. We consider $\cF$ as a sheaf of $K$-vector spaces and each $\varphi_g$ as a $K$-linear automorphism of $\cF$. Then $\cG$ is defined as
	$$\cG\vcentcolon= \ker \big(\prod\limits_{g\in G} (\varphi_g^K-\id_{\cF^K})\colon \cF^K\To \prod_{g\in G} \cF^K\big)\in\Mod{K_X}$$
	together with a canonical $L$-linear morphism $L_X\otimes_{K_X} \cG\to \cF$ due to Lemma~\ref{lemma:ScalarExtAdj}.
	Since kernels and extension of scalars commute with taking stalks, the isomorphism $L_X\otimes_{K_X}\cG\iso \cF$ is obtained from Galois descent for vector spaces (cf.\ e.g.\ Section~\ref{subsec:Galois} and \cite{Conrad}). The commutation of the above square is also clear from the construction of $\cG$ as a sheaf of invariants.
	
	This proves in particular essential surjectivity of the functor $\Phi_{L/K}^G$.
	Full faithfulness follows from Propositions~\ref{prop:morphLification} and \ref{prop:morphLificationFin}.
\end{proof}

\begin{rem}
	We can also similarly establish a Galois descent statement for functor categories as in Example~\ref{ex:GConj} (b). They admit an obvious functor of extension of scalars $\mathrm{Funct}(\cC,\Vect{K})\to \mathrm{Funct}(\cC,\Vect{L})$. This yields then in particular Galois descent for presheaves of vector spaces (if we take $\cC=\mathrm{Op}(X)^\op$), and to deduce Proposition~\ref{prop:GaloisDescentSheaves}, one just needs to check that the descent of a sheaf is still a sheaf. (The extension of scalars for sheaves is indeed the same as the one for presheaves in the finite Galois case due to Proposition~\ref{prop:extIm}.)
	
	Choosing more complicated categories $\cC$ (so-called categories of exit paths), one can also express certain categories of constructible sheaves as functor categories. This technique is known as \emph{exodromy} (see e.g.\ \cite{Treumann} and \cite{BGH}), and it has also been set up in the framework of quasi-categories (see \cite{Lurie}, and \cite{PT22} for a recent generalization). Such exodromy equivalences might serve as an alternative approach to our questions for certain constructible sheaves, and they might also lead to a clearer study of complexes of sheaves. We will, however, not take this viewpoint here. We are grateful to Jean-Baptiste Teyssier for drawing our attention to these constructions.
\end{rem}

\subsection{The case of derived categories of sheaves}

Having established the above equivalence (Theorem~\ref{prop:GaloisDescentSheaves}) for sheaves, one would, of course, like to generalize such a statement to derived categories of sheaves. Indeed, for a finite Galois extension $L/K$ with Galois group $G$, we still have the functor induced by extension of scalars
\begin{align*}
	\Phi_{L/K}^G\colon \DbK{X} &\To \DbL{X}^G.
\end{align*}
The following is deduced directly -- as above -- from Propositions~\ref{prop:morphLification} and \ref{prop:morphLificationFin}.
\begin{prop}\label{prop:ExtDerivedFF}
	Let $L/K$ be a finite Galois extension. The functor $\Phi_{L/K}^G$ is fully faithful on $\DbK{X}$.
\end{prop}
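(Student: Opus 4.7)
The plan is to reduce this essentially to combining Propositions~\ref{prop:morphLification} and \ref{prop:morphLificationFin}, which have already done the substantive work. Full faithfulness means that for any $\cF,\cG\in\DbK{X}$, the map
$$\sHom_{\DbK{X}}(\cF,\cG)\To \sHom_{\DbL{X}^G}\bigl(\Phi_{L/K}^G(\cF),\Phi_{L/K}^G(\cG)\bigr), \qquad f\longmapsto 1\otimes f,$$
is a bijection. Here $\Phi_{L/K}^G(\cF)$ denotes $L_X\otimes_{K_X}\cF$ endowed with its canonical $G$-structure coming from the natural $G$-structure on $L_X$ described in Example~\ref{ex:Gstructures}.

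First I would unravel the target: by the definition of $\DbL{X}^G$ (Definition~\ref{def:catG}), the set $\sHom_{\DbL{X}^G}(\Phi_{L/K}^G(\cF),\Phi_{L/K}^G(\cG))$ is the subset of $\sHom_{\DbL{X}}(L_X\otimes_{K_X}\cF,L_X\otimes_{K_X}\cG)$ consisting of those morphisms $f$ that fit into the square
$$\begin{tikzcd}
L_X\otimes_{K_X}\cF \arrow{r}{f}\arrow{d}{g\otimes\id}[swap]{\iso} & L_X\otimes_{K_X}\cG \arrow{d}{g\otimes\id}[swap]{\iso}\\
\overline{L_X}^g\otimes_{K_X}\cF \arrow{r}{\overline{f}^g} & \overline{L_X}^g\otimes_{K_X}\cG
\end{tikzcd}$$
for every $g\in G$. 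This is exactly the subspace studied in Proposition~\ref{prop:morphLificationFin}.

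Next, since $L/K$ is finite, Proposition~\ref{prop:morphLification}(a) gives the $L$-linear isomorphism $\sHom_{\DbL{X}}(L_X\otimes_{K_X}\cF,L_X\otimes_{K_X}\cG)\iso L\otimes_{K}\sHom_{\DbK{X}}(\cF,\cG)$, induced by the natural morphism of Lemma~\ref{lemma:natMorph}. Under this identification, Proposition~\ref{prop:morphLificationFin} identifies the $G$-compatible morphisms with the image of $1\otimes (-)$, i.e.\ with the $K$-subspace $1\otimes \sHom_{\DbK{X}}(\cF,\cG)$. Composing these two identifications gives a bijection between $\sHom_{\DbK{X}}(\cF,\cG)$ and $\sHom_{\DbL{X}^G}(\Phi_{L/K}^G(\cF),\Phi_{L/K}^G(\cG))$, and by construction this bijection is exactly the one induced by the functor $\Phi_{L/K}^G$.

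There is essentially no obstacle: the content lies entirely in Section~\ref{sec:Hom}, and the proof is only a matter of assembling the two propositions correctly and checking that the bijection they combine to produce coincides with the action of $\Phi_{L/K}^G$ on morphisms. The only thing to be a little careful about is ensuring that the $G$-structure on $\Phi_{L/K}^G(\cF)$ is really the one used in Proposition~\ref{prop:morphLificationFin}, but this is immediate from the constructions in Example~\ref{ex:Gstructures} and Lemma~\ref{lemma:GstrSheafification}.
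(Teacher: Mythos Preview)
Your proposal is correct and follows exactly the same approach as the paper: the paper's proof simply says that the statement ``is deduced directly---as above---from Propositions~\ref{prop:morphLification} and \ref{prop:morphLificationFin}'', and you have correctly unpacked how these two propositions combine to give the bijection on hom-sets.
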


\begin{rem}
	Essential surjectivity does not seem to hold in this more general situation. An indication for why this makes sense is the following: To an object $\cF^\bullet=\ldots\to\cF_i\to \cF_{i+1}\to\ldots$, the functor associates the object $L_X\otimes_{K_X}\cF^\bullet = \ldots\to L_X\otimes_{K_X}\cF_i\to L_X\otimes_{K_X}\cF_{i+1}\to\ldots$ (due to the exactness of the tensor product), together with the natural $G$-structure given by that on $L_X$, i.e.\
	$$\begin{tikzcd}
		\ldots \arrow{r} & L_X\otimes_{K_X}\cF_i \arrow{r}\arrow{d}{g\otimes\id} & L_X\otimes_{K_X}\cF_{i+1} \arrow{r}\arrow{d}{g\otimes\id} & \ldots\\
		\ldots \arrow{r} & \overline{L_X}^g\otimes_{K_X}\cF_i \arrow{r} & \overline{L_X}^g\otimes_{K_X}\cF_{i+1} \arrow{r} & \ldots
	\end{tikzcd}$$
	In particular, this $G$-structure is given by morphisms of complexes (rather than roofs, as is the general case for morphisms in the derived category). In general, morphisms $\varphi_g$ in the derived category can -- after choosing suitable resolutions -- be represented as morphisms of complexes, but the compatibilities that the $\varphi_g$ have to satisfy will only hold up to homotopy of morphisms of complexes, and hence in general such a $G$-structure will not be in the essential image of $\Phi_{L/K}^G$.
\end{rem}

A standard technique for proofs of statements in derived categories is by induction on the amplitude of a complex. At least for complexes concentrated in two successive degrees, we can use this approach to deduce some existence statement of a $K$-lattice from the existence of a $G$-structure.

\begin{prop}\label{prop:descentDerived}
	Let $L/K$ be a finite Galois extension with Galois group $G$, and let $X$ be a topological space. Let $\cF^\bullet\in \DbL{X}$ be equipped with a $G$-structure $(\varphi_g)_{g\in G}$. Assume that $\cF^\bullet$ is concentrated in degrees $a$ and $a+1$ (i.e.\ the only non-vanishing cohomologies are $H^a(\cF^\bullet)$ and $H^{a+1}(\cF^\bullet)$). Then there exist $\cF^\bullet_K\in \DbK{X}$ and an isomorphism $L_X\otimes_{K_X} \cF^\bullet_K \iso \cF^\bullet$.
\end{prop}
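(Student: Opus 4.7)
The plan is to reduce the statement to the sheaf-level descent of Proposition~\ref{prop:GaloisDescentSheaves} together with descent of the single connecting morphism in the truncation triangle of $\cF^\bullet$, using Proposition~\ref{prop:morphLificationFin}. Since $\cF^\bullet$ is concentrated in degrees $a$ and $a+1$, the canonical truncation gives a distinguished triangle
$$\cH^a[-a] \To \cF^\bullet \To \cH^{a+1}[-a-1] \xlongrightarrow{\delta} \cH^a[-a+1]$$
in $\DbL{X}$, where $\cH^i\vcentcolon= H^i(\cF^\bullet)$, and $\delta$ classifies $\cF^\bullet$ as an element of $\sHom_{\DbL{X}}(\cH^{a+1},\cH^a[2])$. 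The given $G$-structure on $\cF^\bullet$ induces $G$-structures on $\cH^a$ and $\cH^{a+1}$ (by the corollary following Lemma~\ref{lemma:compatConj}), so by Proposition~\ref{prop:GaloisDescentSheaves} we obtain sheaves $\cG^a,\cG^{a+1}\in\Mod{K_X}$ with $G$-equivariant isomorphisms $L_X\otimes_{K_X}\cG^i \iso \cH^i$.

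Next, one shows that $\delta$ is Galois-equivariant. Applying the exact auto-equivalence $\overline{(-)}^g$ to the truncation triangle above (and using that it commutes with cohomology, hence with truncation) yields the truncation triangle of $\overline{\cF^\bullet}^g$ with connecting morphism $\overline{\delta}^g$. Functoriality of truncation then turns the $G$-structure isomorphism $\varphi_g\colon \cF^\bullet \overset{\sim}{\to}\overline{\cF^\bullet}^g$ into a morphism of distinguished triangles, whose components on the outer terms are the induced $G$-structure maps of $\cH^a$ and $\cH^{a+1}$. The commutativity of the resulting square involving $\delta$ and $\overline{\delta}^g$ is precisely the Galois-invariance condition in Proposition~\ref{prop:morphLificationFin}. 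Combined with Proposition~\ref{prop:morphLification}(a) (applicable since $L/K$ is finite, no constructibility needed), this produces a morphism $\delta_K\in\sHom_{\DbK{X}}(\cG^{a+1},\cG^a[2])$ with $\delta = 1\otimes\delta_K$ under the identifications $\cH^i\iso L_X\otimes_{K_X}\cG^i$.

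Finally, complete $\delta_K[-a-1]$ to a distinguished triangle in $\DbK{X}$:
$$\cG^a[-a] \To \cF^\bullet_K \To \cG^{a+1}[-a-1] \xlongrightarrow{\delta_K} \cG^a[-a+1].$$
Applying the exact functor $L_X\otimes_{K_X}(-)$ yields a distinguished triangle in $\DbL{X}$ whose outer vertices and connecting morphism match (via the isomorphisms constructed above) those of the truncation triangle of $\cF^\bullet$. By axiom TR3, the isomorphisms on the outer vertices extend to an isomorphism $L_X\otimes_{K_X}\cF^\bullet_K \iso \cF^\bullet$.

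The main obstacle is establishing Galois-invariance of $\delta$: one must check carefully that $\overline{(-)}^g$ intertwines the truncation triangle of $\cF^\bullet$ with that of $\overline{\cF^\bullet}^g$ and that the resulting $G$-structure maps on $\cH^{a}$ and $\cH^{a+1}$ coincide with those inherited from $\varphi_g$. Everything else is formal, given the results of Section~\ref{sec:Hom} and Proposition~\ref{prop:GaloisDescentSheaves}. It is precisely because an object concentrated in two degrees is classified by a single morphism in $\sHom_{\DbL{X}}(\cH^{a+1},\cH^a[2])$ that this strategy works here; for complexes with more cohomology sheaves, iterated Postnikov data would be required, which cannot be handled by this argument alone, in line with the obstruction sketched in the preceding remark.
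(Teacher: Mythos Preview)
Your proposal is correct and follows essentially the same route as the paper's proof: descend the two cohomology sheaves via Proposition~\ref{prop:GaloisDescentSheaves}, use functoriality of truncation together with the $G$-structure to see that the connecting morphism is Galois-invariant, descend it via Proposition~\ref{prop:morphLificationFin}, complete to a triangle over $K$, and conclude by TR3 plus the triangulated five-lemma. The only cosmetic difference is that the paper works with the rotated triangle and descends the map $f\colon H^{a+1}(\cF^\bullet)[-a-1]\to H^a(\cF^\bullet)[-a]$ directly, whereas you phrase it in terms of the connecting morphism $\delta\in\sHom_{\DbL{X}}(\cH^{a+1},\cH^a[2])$; these are the same data up to rotation.
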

\begin{proof}
	We know the statement for sheaves (i.e.\ complexes concentrated in one degree) from Proposition~\ref{prop:GaloisDescentSheaves}, so we know that $H^a(\cF^\bullet)\iso L_X\otimes_{K_X}\cG_a$ and $H^{a+1}(\cF^\bullet)\iso L_X\otimes_{K_X}\cG_{a+1}$ such that under these isomorphisms the $G$-structures on the $H^i(\cF^\bullet)$ induced by the one on $\cF^\bullet$ coincide with those given by the natural $G$-structure on $L_X$.
	
	Using the standard truncation functors for complexes (with respect to the standard t-structure on $\DbL{X}$), there is a distinguished triangle
	$$H^{a+1}(\cF^\bullet)[-a-2]\To H^a(\cF^\bullet)[-a] \To \cF^\bullet \ToPO$$
	and for any $g\in G$ the $G$-structure on $\cF^\bullet$ induces an isomorphism of distinguished triangles
	\begin{equation}\label{eq:diagTruncation}\begin{tikzcd}
			H^{a+1}(\cF^\bullet)[-a-2]\arrow{r}{f}\arrow{d}[swap]{\iso} & H^a(\cF^\bullet)[-a] \arrow{r} \arrow{d}[swap]{\iso} & \cF^\bullet \arrow{r}{+1} \arrow{d}{\varphi_g}[swap]{\iso} & \text{ }\\
			\overline{H^{a+1}(\cF^\bullet)[-a-2]}^g\arrow{r} & \overline{H^a(\cF^\bullet)[-a]}^g \arrow{r} & \overline{\cF^\bullet}^g \arrow{r}{+1} & \text{ }
	\end{tikzcd}\end{equation}
	(note that truncation and conjugation commute).
	
	By Proposition~\ref{prop:morphLificationFin}, there is a morphism $\tilde{f} \colon \cG_{a+1}[-a-2]\to \cG_{a}[-a]$ such that $f=1\otimes \tilde{f}$. We can complete it to a distinguished triangle
	$$\cG_{a+1}[-a-2]\overset{\tilde{f}}{\To} \cG_{a}[-a] \To \cG^\bullet \ToPO$$
	where is $\cG^\bullet$ is unique up to (non-unique) isomorphism. Hence, there exists a (non-unique) isomorphism $\gamma\colon L_X\otimes_{K_X}\cG^\bullet \overset{\sim}{\to} \cF^\bullet$. In other words, we have completed \eqref{eq:diagTruncation} to a commutative diagram
	
	$$\begin{tikzcd}[scale cd=0.7]
		&[-15pt]L_X\otimes_{K_X}\cG_{a+1}[-a-2]\arrow{ld}[swap]{\iso}\arrow{dd}[crossing over]{} \arrow{rr}{g\otimes 1}&[-15pt]&[-15pt] L_X\otimes_{K_X}\cG_{a}[-a]\arrow{ld}[swap]{\iso}\arrow{dd}[crossing over]{} \arrow{rr} &[-15pt]&[-10pt] L_X\otimes_{K_X}\cG^\bullet\arrow{ld}[swap]{\iso}\arrow[dashed,crossing over]{dd} \arrow{r}{+1} &[-10pt] \text{ } \\
		H^{a+1}(\cF^\bullet)[-a-2]\arrow{rr}[near start]{f}\arrow{dd}[swap]{\iso} && H^a(\cF^\bullet)[-a] \arrow{rr}  && \cF^\bullet \arrow{r}{+1} & \text{ }\\
		&\overline{L_X}^g\otimes_{K_X}\cG_{a+1}[-a-2]\arrow{ld}[swap]{\iso} \arrow{rr}{} && \overline{L_X}^g\otimes_{K_X}\cG_{a}[-a]\arrow{ld}[swap]{\iso} \arrow{rr}{} &&\overline{L_X}^g\otimes_{K_X}\cG^\bullet\arrow{ld}[swap]{\iso} \arrow{r}{+1} & \text{ } \\
		\overline{H^{a+1}(\cF^\bullet)[-a-2]}^g\arrow{rr} && \overline{H^a(\cF^\bullet)[-a]}^g \arrow{rr} \arrow[crossing over,<-]{uu}[near end]{\iso} && \overline{\cF^\bullet}^g \arrow{r}{+1} \arrow[crossing over,<-]{uu}[near end]{\iso}[swap,near end]{\varphi_g} & \text{ }
	\end{tikzcd}$$
\end{proof}

\begin{rem}
	The construction in the proof above is highly non-canonical. This is due to the fact that the third objects and morphisms in distinguished triangles are not unique up to unique isomorphism. In particular, although we find an object $\cG^\bullet$ with $L_X\otimes_{K_X}\cG^\bullet\iso \cF^\bullet$ here, it seems not clear that the given $G$-structure corresponds to the natural one on $L_X\otimes_{K_X} \cG^\bullet$. In other words, it is not clear that the dashed arrow in the big diagram is given by $\id_{\cG^\bullet}\otimes g$. This has two implications:
	\begin{itemize}
		\item First, this proposition does not show that the object $(\cF,(\varphi_g)_{g\in G})$ is in the essential image of the functor $L_X\otimes_{K_X} (-)$. It just shows that $\cF^\bullet$ can be realized as such a tensor product if there exists a $G$-structure on it (but not saying that exactly this $G$-structure comes through the tensor product).
		\item Secondly, we cannot proceed inductively to get similar results for complexes concentrated in more than two degrees since the fact that the $G$-structure corresponds to the natural one on the tensor product is crucial for descending the morphism $f$ to $\tilde{f}$.
	\end{itemize}
\end{rem}

\subsection{Galois descent for perverse sheaves}

It is reasonable to expect that the results are closer to the statements for sheaves if we do not consider general objects of the derived category, but perverse sheaves.\footnote{Thinking in terms of Algebraic Analysis, perverse sheaves are the counterparts of regular holonomic D-modules -- objects concentrated in one degree -- via the Riemann--Hilbert correspondence (see \cite{KasRHreg}), so these are the objects to understand if one wants to understand topologically the category of regular holonomic D-modules.}
\begin{rem}
	Galois descent for perverse sheaves seems to be a well-known result. In the case $\C/\R$, an equivalence as in Theorem~\ref{thm:GaloisDescentPerv} is explicitly mentioned in \cite[§12.6.2.5]{Moc15}. Moreover, Theorem~\ref{thm:GaloisDescentPerv} should be a special case of the much more general framework developed in \cite{PT22}.
\end{rem}
From now on, we work in the context of complex analytic varieties, which is a generalization of (but often analogous to) the theory of complex manifolds, but it is a more natural setup for Beilinson's construction that we use later.

Let $L/K$ be a finite Galois extension, and let $X$ be a complex analytic variety.
The notion of $G$-conjugation on $\PervL{X}$ as well as the functors of extension and restriction of scalars on perverse sheaves are inherited from those between $\DbK{X}$ and $\DbL{X}$.

Given a perverse sheaf $\cF\in\PervL{X}$ together with a $G$-structure $(\varphi_g)_{g\in G}$, we can define the presumptive $K$-structure similarly to the case of vector spaces and sheaves, namely as an object of invariants. Contrarily to the case of derived categories, we dispose of the notion of kernels here, since perverse sheaves form an abelian category.

Consider the underlying $K$-perverse sheaf $\cF^K\in\PervK{X}$ (which is the same for any $\overline{\cF}^g$) with the automorphisms $\varphi^K_g\colon \cF^K\to\cF^K$ induced by the $\varphi_g$.
Then define $$\cG\vcentcolon= \ker\big(\prod_{g\in G}(\varphi_g^K-\id_{\cF^K})\colon \cF^K\To \prod_{g\in G} \cF^K\big).$$ 
Clearly, we have a morphism $\cG\to \cF^K$, and hence by Lemma~\ref{lemma:ScalarExtAdj}, we have a morphism
\begin{equation}\label{eq:latticeComplPerv}
	L_X\otimes_{K_X}\cG \to \cF.
\end{equation}

We will give a proof of the fact that it is an isomorphism. For this, we will use a description of perverse sheaves due to Beilinson \cite{Bei} (see also \cite{Rei} for some more details and complements on Beilinson's article).

\paragraph{Beilinson's equivalence} Let us recall the idea of Beilinson's ``gluing'' of perverse sheaves, and study the properties of his equivalence with respect to field extensions. We will not review all the details, for which we refer to \cite{Bei} and \cite{Rei}.

Let $k$ be a field, $X$ a complex analytic variety and $f\colon X\to\C$ a holomorphic function. Write $Z\vcentcolon=f^{-1}(0)$ and $U\vcentcolon= X\setminus Z$ with inclusion $j\colon U\hookrightarrow X$. Moreover, we fix a generator $t$ of the fundamental group $\pi_1(\C\setminus\{0\})$.

There is a functor $\NCunf\colon \Pervk{U}\to \Pervk{Z}$ of \emph{unipotent nearby cycles}. By its construction, the fundamental group $\pi_1(\C\setminus\{0\})$ acts on $\NCunf(\cF_U)$ for any $\cF_U\in\Pervk{U}$. In particular, the fixed generator $t$ induces an endomorphism of any such $\NCunf(\cF_U)$, which we will still denote by $t$.

There is also a functor $\VCunf\colon \Pervk{X}\to\Pervk{Z}$ of \emph{unipotent vanishing cycles}. 

One defines the category of \emph{gluing data} $\GDXfk$ to be the category whose objects are tuples $(\cF_U,\cF_Z,u,v)$, where $\cF_U\in\Pervk{U}$, $\cF_Z\in\Pervk{Z}$ and $u$ and $v$ are morphisms
$$\NCunf(\cF_U)\overset{u}{\to} \cF_Z \overset{v}{\to} \NCunf(\cF_U)$$
such that their composition coincides with the endomorphism $(\id-t)$ of $\NCunf(\cF_U)$.
A morphism in $\GDXfk$ is defined in the obvious way, as two morphisms of perverse sheaves on $U$ and $Z$, respectively, making the natural diagram with the $u$ and $v$ commute.

It is shown (see \cite[Proposition 3.1]{Bei} or \cite[Theorem 3.6]{Rei}) that $\GDXfk$ is an abelian category and that there is an equivalence
$$\Ff\colon \Pervk{X}\overset{\sim}{\To} \GDXfk,$$
sending $\cF\in\Pervk{X}$ to the tuple $(j^{-1}\cF,\VCunf(\cF),u,v)$, where we will not go into detail with the construction of the maps $u$ and $v$. Also the quasi-inverse is explicitly described.

Let us now study this equivalence in the context of a field extension  $L/K$. The construction of the nearby and vanishing cycles functor as well as of the maps $u$ and $v$ are completely topological and do not depend on the coefficient field (they could as well be performed on sheaves of sets). Therefore, if $L/K$ is a finite field extension, the forgetful functor (restriction of scalars) $\mathsf{for}_{L/K}\colon \PervL{X}\to\PervK{X}$ corresponds to a forgetful functor $\GDXfL\to\GDXfK$, where the latter is given by the ones on perverse sheaves on $U$ and $Z$. (Note that the finiteness of $L/K$ is essential here since otherwise the forgetful functor $\mathsf{for}_{L/K}\colon \PervL{X}\to\PervK{X}$ is not well-defined: A perverse sheaf is a complex of sheaves whose cohomologies have in particular have finite-dimensional stalks.)

On the other hand, we have the following statement about extension of scalars.
\begin{lemma}\label{lemma:NCVCext}
	Let $L/K$ be a finite Galois extension.	\\
	For $\cA\in\PervK{U}$, we have an isomorphism
	$$\NCunf(L_U\otimes_{K_U}\cA)\iso L_Z\otimes_{K_Z} \NCunf(\cA).$$
	For $\cB\in\PervK{X}$, we have an isomorphism
	$$\VCunf(L_X\otimes_{K_X}\cB)\iso L_Z\otimes_{K_Z} \VCunf(\cB).$$
	The scalar extension functor $\Phi_{L/K}\colon \PervK{X}\to\PervL{X}$ corresponds to the functor
	\begin{align*}
		\GDXfK&\To\GDXfL\\
		(\cF_U,\cF_Z,u,v)&\longmapsto (L_U\otimes_{K_U}\cF_U,L_Z\otimes_{K_Z}\cF_Z,\id_{L_Z}\otimes u,\id_{L_Z}\otimes v)
	\end{align*}
via Beilinson's equivalence $\Ff$.
\end{lemma}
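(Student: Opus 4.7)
The plan is to exploit the fact that $\NCunf$ and $\VCunf$ admit explicit constructions in terms of the six Grothendieck operations applied to specific auxiliary unipotent local systems on $\C^*$, and that every ingredient is compatible with extension of scalars when $L/K$ is finite. Concretely, Beilinson builds $\NCunf$ and $\VCunf$ out of: (i) the inverse image along $f$, (ii) tensor products with the rank-$a$ unipotent local systems $\mathcal{J}_a$ on $\C^*$ given by a single Jordan block of monodromy around the origin, (iii) the direct images $\RR j_*$ and $\RR j_!$ along $j\colon U\hookrightarrow X$, and (iv) limits and colimits over $a$.

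The first main step is to verify compatibility of each building block with $L_X\otimes_{K_X}(-)$. Inverse and direct images are handled by Proposition~\ref{prop:extIm}, where finiteness of $L/K$ is precisely what is needed for the $\RR j_*$ piece. Tensor products commute with extension of scalars via the standard monoidal isomorphism. The local systems $\mathcal{J}_a$ are defined integrally, so the base-change isomorphism $L_{\C^*}\otimes_{K_{\C^*}}(\mathcal{J}_a)_K\iso (\mathcal{J}_a)_L$ is immediate. Finally, since $L/K$ is finite, the functor $L_X\otimes_{K_X}(-)$ is isomorphic to $\RHom_{K_X}(L_X,-)$ and therefore admits both a left and a right adjoint, so it commutes with arbitrary limits and colimits. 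Composing these ingredients yields the first two displayed isomorphisms.

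For the final assertion, Beilinson's functor sends $\cF\in\PervL{X}$ to the tuple $(j^{-1}\cF,\VCunf(\cF),u,v)$, where $u$ and $v$ are canonical morphisms constructed from the same sheaf-theoretic operations as above. By the previous step, $\Ff(L_X\otimes_{K_X}\cG)$ is naturally isomorphic, component by component, to $(L_U\otimes_{K_U}j^{-1}\cG,\,L_Z\otimes_{K_Z}\VCunf(\cG),\,\id_{L_Z}\otimes u,\,\id_{L_Z}\otimes v)$, which is precisely the image of $\Ff(\cG)$ under the functor on gluing data described in the statement. A diagram chase using the naturality of the isomorphisms from Section~\ref{sec:Hom} then shows that $u$ and $v$ transform as claimed.

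The main obstacle I anticipate is bookkeeping rather than genuine mathematical content: one must trace through Beilinson's explicit definition of $u$ and $v$ to confirm that the individual compatibility isomorphisms for the operations in~(i)--(iv) fit together into a morphism of gluing data. Since all natural morphisms involved arise from adjunctions and from canonical natural transformations between the six operations, this ultimately reduces to the commutativity of a handful of diagrams, each of which follows from the naturality statements established in Section~\ref{sec:Hom}.
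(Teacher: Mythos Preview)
Your proposal is correct and follows essentially the same philosophy as the paper---namely, that the constructions of $\NCunf$, $\VCunf$, $u$ and $v$ are built entirely from operations already known to commute with extension of scalars for finite $L/K$---but you implement this philosophy along a genuinely different route. The paper works with the classical Deligne nearby cycles functor $\RR\varphi_f=i^{-1}\RR j_*\RR\pi_*\pi^{-1}$ (where $\pi$ is induced by the universal cover of $\C^*$), argues that this commutes with $\Phi_{L/K}$ via Proposition~\ref{prop:extIm}, and then observes that the decomposition into the unipotent and non-unipotent parts under the monodromy $t$ is purely ``topological'' and therefore also commutes. For $\VCunf$ it similarly invokes the maximal extension functor $\Xi_f$ and the three-term complex $j_!j^{-1}\cB\to\Xi_f(j^{-1}\cB)\oplus\cB\to j_*j^{-1}\cB$ without unpacking $\Xi_f$ in terms of the $\mathcal{J}_a$.

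Your approach instead goes directly through Beilinson's original construction via the unipotent Jordan-block local systems $\mathcal{J}_a$, checking compatibility of each ingredient (inverse images, tensor with $\mathcal{J}_a$, $\RR j_!$ and $\RR j_*$, and the stabilizing (co)limits over $a$) separately. This is arguably more faithful to \cite{Bei} and \cite{Rei}, and it sidesteps the need to argue that the generalized eigenspace decomposition under $t$ is preserved by $\Phi_{L/K}$. One small point worth tightening: the isomorphism $L_X\otimes_{K_X}(-)\iso\RHom_{K_X}(L_X,-)$ you invoke is an isomorphism of endofunctors of $\DbK{X}$, not literally of functors into $\DbL{X}$; the cleanest way to phrase your limit-preservation argument is that $\mathsf{for}_{L/K}$ is conservative and exact, and $\mathsf{for}_{L/K}\circ\Phi_{L/K}$ is a finite direct sum, hence preserves the finite limits (kernels) that actually occur in Beilinson's construction.
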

\begin{proof}
	To prove this, we need an idea of what the gluing data associated to a perverse sheaf are more concretely. The construction of $\NCunf(\cA)$ is performed as follows: One first defines the nearby cycles functor $\RR \varphi_f = i^{-1} \RR j_* \RR \pi_* \pi^{-1}$ (where $i\colon Z\hookrightarrow X$ is the inclusion and $\pi\colon U\times_{\C\setminus\{0\}} \widetilde{\C\setminus\{0\}}\to U$ is the canonical map, with $\widetilde{\C\setminus\{0\}}$ the universal covering; this is, however, not important for what follows). Then one notices that $t$ acts naturally on $\RR \varphi_f(\cA)$ and that there is a decomposition $\RR \varphi_f(\cA)\iso \RR \varphi_f^\mathrm{un}(\cA) \oplus \RR \varphi_f^{\neq 1}(\cA)$, where $\id-t$ is nilpotent on the first and an automorphism on the second summand. Then one sets $\NCunf(\cA)\vcentcolon= \RR \varphi_f^\mathrm{un}(\cA)[-1]$.
	
	It is clear that $\RR \varphi_f$ commutes with extension of scalars (see Proposition~\ref{prop:extIm}). Moreover, the action of $t$ is induced purely topologically, i.e.\ the action of $t$ on $\RR \varphi_f(L_U\otimes_{K_U}\cA)\iso L_Z\otimes_{K_Z} \RR \varphi_f(\cA)$ is induced by the one on $\RR \varphi_f(\cA)$. Hence, the part of $\RR \varphi_f(L_U\otimes_{K_U}\cA)$ on which $\id-t$ is nilpotent will be exactly $L_Z\otimes_{K_Z} \RR \varphi_f^\mathrm{un}(\cA)$. This proves the first statement.
	
	The construction of $\VCunf(\cB)$ is roughly as follows: One first defines the \emph{maximal extension functor} $\Xi_f\colon \Pervk{U}\to \Pervk{X}$ and a complex
	$$j_!j^{-1}\cB\to \Xi_f(j^{-1}\cB)\oplus \cB\to j_*j^{-1}\cB.$$
	Then one defines $\VCunf(\cB)$ as the cohomology of this complex and notes that it is supported on $Z$.
	
	Without going too much into the details of the construction, let us just mention that the definitions of $\Xi_f$ and the morphisms in the above complex are again just topological, i.e.\ using operations that do not depend upon the exact field of coefficients (such as natural morphisms $j_!\to j_*$, inclusions/projections, kernels etc.). Therefore and due to Proposition~\ref{prop:extIm}, the complex associated to $L_X\otimes_{K_X} \cB$ is $$L_X\otimes_{K_X} j_!j^{-1}\cB\to L_X\otimes_{K_X} \big(\Xi_f(j^{-1}\cB)\oplus \cB\big)\to L_X\otimes_{K_X} j_*j^{-1}\cB$$ and finally, since extension of scalars is exact and hence commutes with taking cohomology, we get the second isomorphism of the lemma.
	
	For the last statement, the arguments are similar, remarking that the definition of the maps $u$ and $v$ is topological and can therefore be defined over the smaller field and just ``upgraded'' to $L$.
\end{proof}

Now let $L/K$ be a field extension and $G\vcentcolon=\mathrm{Aut}(L/K)$ (for our purposes, it will be a finite Galois extension with Galois group $G$). There is then an obvious $G$-conjugation on $\GDXfL$ defined by
$$\overline{(\cF_U,\cF_Z,u,v)}^g \vcentcolon= (\overline{\cF_U}^g,\overline{\cF_Z}^g,\overline{u}^g,\overline{v}^g)$$
for any $g\in G$, i.e.\ simply induced by the natural $G$-conjugations on $\PervL{U}$ and $\PervL{Z}$.
\begin{lemma}\label{lemma:NCVCconj}
	Let $L/K$ be a field extension and let $g\in\mathrm{Aut}(L/K)$.\\
	For $\cA\in\PervL{U}$, we have an isomorphism
	$$\NCunf(\overline{\cA}^g)\iso \overline{\NCunf(\cA)}^g.$$
	For $\cB\in\PervL{X}$, we have an isomorphism
	$$\VCunf(\overline{\cB}^g)\iso \overline{\VCunf(\cB)}^g.$$
	Under Beilinson's equivalence $\Ff\colon \PervL{X}\overset{\sim}{\to} \GDXfL$, the natural $G$-conjugations on both categories correspond to each other, and hence a $G$-structure on $\cF\in\PervL{X}$ induces a $G$-structure on $\Ff(\cF)$ and vice versa.
\end{lemma}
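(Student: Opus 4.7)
The strategy is to mimic the proof of Lemma~\ref{lemma:NCVCext}, replacing extension of scalars by $G$-conjugation and using Lemma~\ref{lemma:compatConj} in place of Proposition~\ref{prop:extIm}. The guiding principle is that $\NCunf$, $\VCunf$, and the structural maps $u,v$ in Beilinson's construction are assembled entirely from the six Grothendieck operations together with purely topological data (the monodromy action of $t$), each of which is compatible with $\overline{(\cdot)}^g$.

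For the first isomorphism, I would unpack the definition $\RR\varphi_f = i^{-1}\RR j_*\RR\pi_*\pi^{-1}$. Parts (a) and (b) of Lemma~\ref{lemma:compatConj} immediately give a natural isomorphism $\overline{\RR\varphi_f(\cA)}^g \iso \RR\varphi_f(\overline{\cA}^g)$. The crucial additional point is that the monodromy endomorphism $t$ arises from a deck transformation of the universal cover of $\C\setminus\{0\}$; it is therefore independent of the coefficient field, and the isomorphism above intertwines the $t$-actions on both sides. Consequently the generalised $1$-eigenspace decomposition $\RR\varphi_f \iso \RR\varphi_f^{\mathrm{un}} \oplus \RR\varphi_f^{\neq 1}$ is preserved, and restricting to the unipotent summand (shifted by $[-1]$) yields the desired isomorphism $\NCunf(\overline{\cA}^g) \iso \overline{\NCunf(\cA)}^g$.

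For the second isomorphism, I would invoke Beilinson's three-term complex
\[ j_!j^{-1}\cB \To \Xi_f(j^{-1}\cB)\oplus \cB \To j_*j^{-1}\cB, \]
whose middle perverse cohomology is $\VCunf(\cB)$. The maximal extension functor $\Xi_f$ is itself assembled from $\NCunf$ and ordinary direct/inverse image functors, so conjugation commutes with it by the first part and Lemma~\ref{lemma:compatConj}(a),(b). Since $\overline{(\cdot)}^g$ is moreover exact and commutes with $j_!,j_*,j^{-1}$ as well as with taking perverse cohomology (perversity being invariant under conjugation by Lemma~\ref{lemma:compatConj}(g)), one obtains $\VCunf(\overline{\cB}^g) \iso \overline{\VCunf(\cB)}^g$.

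The third assertion then follows formally: the description $\Ff(\cF) = (j^{-1}\cF, \VCunf(\cF), u, v)$, combined with the fact that $u$ and $v$ are induced by field-independent natural transformations, gives a canonical identification $\Ff(\overline{\cF}^g) \iso \overline{\Ff(\cF)}^g$ intertwined with the two $G$-conjugations. A $G$-structure $(\varphi_g)_{g\in G}$ on $\cF$ then transports through this identification to a compatible family of isomorphisms in $\GDXfL$, and conversely. The main technical point -- and really the only subtlety -- is to verify that the isomorphisms produced above are themselves $G$-equivariant in $g$, i.e.\ that they satisfy the cocycle condition of Definition~\ref{def:Gconj} when $g,h\in G$ are composed; this amounts to checking that the natural isomorphisms in Lemma~\ref{lemma:compatConj} compose as expected, which they do because each is induced from a purely set-theoretic identity on sections. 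Once this coherence is in place, the translation of $G$-structures through $\Ff$ is automatic.
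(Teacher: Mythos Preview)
Your proposal is correct and follows essentially the same approach as the paper: both argue by mimicking the proof of Lemma~\ref{lemma:NCVCext}, replacing Proposition~\ref{prop:extIm} by Lemma~\ref{lemma:compatConj} and appealing to the field-independence of the monodromy action and of the construction of $\Xi_f$, $u$, $v$. Your argument is somewhat more detailed than the paper's terse proof (in particular, your remark on the cocycle coherence is more careful than anything made explicit there), but the strategy is identical.
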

\begin{proof}
	Similarly to the proof of Lemma~\ref{lemma:NCVCext}, the first statement follows mainly from the fact that conjugation is compatible with direct and inverse image functors (Lemma~\ref{lemma:compatConj}). Moreover, since conjugation is an autoequivalence, the endomorphism $\id-t$ is nilpotent if and only if $\id-\overline{t}^g=\overline{\id-t}^g$ is.
	
	The second and third statements are again due to the fact that the entire constructions of $\VCunf$, $u$ and $v$ do not depend on the vector space structure and hence are the same if defined before or after applying $g$-conjugation.
\end{proof}

\paragraph{Application to Galois descent of perverse sheaves} We are now ready to prove that the object $\cG$ constructed above is actually a $K$-structure of $\cF$.

\begin{prop}
	The morphism \eqref{eq:latticeComplPerv} is an isomorphism.
\end{prop}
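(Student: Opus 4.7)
The plan is to establish essential surjectivity of $\Phi_{L/K}^G$ on $\PervL{X}$ --- that is, to show that $\cF$ admits \emph{some} $K$-structure compatible with the given $G$-structure --- by induction on $d := \dim \supp \cF$, and then to identify the resulting $K$-perverse sheaf with the kernel $\cG$ defined above. Once essential surjectivity is established, the identification with $\cG$ will follow from the classical vector-space formula $(L \otimes_K V)^G \iso V$ applied sheafwise, exactly as in Proposition~\ref{prop:morphLificationFin}.

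For the base case $d = 0$, $\cF$ decomposes as a finite direct sum of perverse sheaves supported at single points, each of which is essentially a finite-dimensional $L$-vector space placed in a single degree and equipped with a $G$-structure. Galois descent for finite-dimensional vector spaces, equivalently Proposition~\ref{prop:GaloisDescentSheaves} applied to the underlying sheaf, produces the required $K$-structure, and the construction of $\cG$ as the kernel of invariants visibly reproduces it.

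For the inductive step $d > 0$, the claim is local on $X$, so fix a point and work in a small neighbourhood. I would choose a holomorphic function $f$ such that, with $Z := f^{-1}(0)$ and $U := X \setminus Z$, the restriction $\cF|_U$ is a shifted local system and $\dim(\supp \cF \cap Z) < d$; such an $f$ exists by taking $Z$ to contain the positive-codimension strata of a stratification adapted to $\cF$. Under Beilinson's equivalence $\Ff$, the sheaf $\cF$ corresponds to gluing data $(j^{-1}\cF, \VCunf(\cF), u, v) \in \GDXfL$ which inherits a $G$-structure by Lemma~\ref{lemma:NCVCconj}. I would then descend each ingredient: $j^{-1}\cF$ is a shifted local system, so it descends via Proposition~\ref{prop:GaloisDescentSheaves}; $\VCunf(\cF)$ is supported in $\supp \cF \cap Z$ of dimension $< d$, so it descends by the inductive hypothesis; the morphisms $u$ and $v$ commute with the induced $G$-structures on source and target and therefore descend by Proposition~\ref{prop:morphLificationFin}; and the relation $u \circ v = \id - t$ survives because the monodromy $t$ is topological in nature, as already exploited in the proof of Lemma~\ref{lemma:NCVCext}. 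Reassembling the resulting datum over $K$ via the quasi-inverse of Beilinson's equivalence, and using Lemma~\ref{lemma:NCVCext} to identify its extension of scalars with the original gluing datum of $\cF$, produces a perverse sheaf $\widetilde{\cG} \in \PervK{X}$ together with an isomorphism $L_X \otimes_{K_X} \widetilde{\cG} \iso \cF$ compatible with $G$-structures.

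Finally, to identify $\widetilde{\cG}$ with the kernel $\cG$ from the statement: under the compatible isomorphism, the automorphisms $\varphi_g^K$ of $\cF^K$ correspond to $g \otimes \id_{\widetilde{\cG}}$ on the underlying $K$-perverse sheaf of $L_X \otimes_{K_X} \widetilde{\cG}$, so the joint kernel of $(\varphi_g^K - \id_{\cF^K})_{g\in G}$ identifies with $\widetilde{\cG}$ by the sheafwise vector-space invariants formula. Hence $\cG \iso \widetilde{\cG}$, and the natural morphism \eqref{eq:latticeComplPerv} is an isomorphism. The main obstacle is the inductive step, and more specifically ensuring consistent descent of the gluing morphisms $u, v$ together with their defining relation $uv = \id - t$; this hinges on the fact, already underlying Lemmas~\ref{lemma:NCVCext} and~\ref{lemma:NCVCconj}, that all constructions entering Beilinson's equivalence are topological and hence recognisable over the smaller field~$K$.
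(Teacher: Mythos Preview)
Your overall strategy matches the paper's: both argue locally, choose a function $f$ so that $j^{-1}\cF$ is a shifted local system on $U = X\setminus f^{-1}(0)$, apply Beilinson's equivalence $\Ff$, and induct on dimension. Your argument is correct, but the organization differs from the paper's in one noteworthy way.

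You first build a candidate $\widetilde{\cG}$ by descending each ingredient of the gluing datum $(j^{-1}\cF,\VCunf(\cF),u,v)$ separately---including the morphisms $u,v$ via Proposition~\ref{prop:morphLificationFin} and the relation $v\circ u=\id-t$ by faithfulness---and only afterwards identify $\widetilde{\cG}$ with the invariants $\cG$. The paper instead works \emph{directly} with $\cG$: it transports the given morphism $L_X\otimes_{K_X}\cG\to\cF$ through $\Ff$, and uses that $j^{-1}$ and $\VCunf$ are exact functors (hence commute with the kernel defining $\cG$) to recognize $j^{-1}\cG$ and $\VCunf(\cG)$ as the invariant subobjects of $j^{-1}\cF$ and $\VCunf(\cF)$. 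The induction hypothesis and Proposition~\ref{prop:GaloisDescentSheaves} then apply immediately to the natural maps $L_U\otimes j^{-1}\cG\to j^{-1}\cF$ and $L_Z\otimes\VCunf(\cG)\to\VCunf(\cF)$. This bypasses entirely the need to descend $u,v$ or verify their relation over $K$: the gluing datum of $\cG$ is already over $K$ by construction, and the only thing to check is that two maps of perverse sheaves are isomorphisms. The paper's induction variable is the dimension of the ambient space (so the recursion is on $Z$ with $\dim Z<\dim X$) rather than $\dim\supp\cF$, but this is an inessential difference.
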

\begin{proof}
	Let $\cF\in\PervL{X}$ be a perverse sheaf on a complex analytic variety $X$ and let $(\varphi_g)_{g\in G}$ be a $G$-structure on it. Let $\cG\in\PervK{X}$ be the invariant $K$-perverse subsheaf (defined analogously as above) with its natural morphism $\cG\to\cF$.
	
	 Since $\cF$ is perverse, it is in particular a complex of sheaves with $\C$-constructible cohomologies. For each of the (finitely many) nontrivial cohomology sheaves $\mathrm{H}^i(\cF)$, there exists a locally finite covering $X=\bigcup_\alpha X^i_\alpha$ by $\C$-analytic subsets\footnote{We follow the terminology in \cite{KS90} here: A subset $Y\subset X$ is called \emph{complex analytic} if for any point $x\in X$ there exists an open neighbourhood $U\subset X$ of $x$ and (finitely many) holomorphic functions $f_1,\ldots,f_n\in\cO_X(U)$ such that $A\cap U=\{f_1=\ldots=f_n=0\}$. Moreover, $Y$ is called $\C$-analytic if $\overline{Y}$ and $\overline{Y}\setminus Y$ are complex analytic subsets of $X$.} on which $\mathrm{H}^i(\cF)$ is locally constant. Moreover, the problem is local (and restriction to an open subset is exact), so we can assume that the set of all $X^i_\alpha$ is finite.
	 
	 If all the $X^i_\alpha$ are of maximal dimension, this means that every cohomology sheaf is locally constant on $X$. By the definition and basic properties of perverse sheaves, $\mathrm{H}^i(\cF)=0$ for $i<-\dim X$ and $\dim \supp \mathrm{H}^{-i}(\cF)\leq i$ for any $i\in \Z$. This implies that $\cF$ is concentrated in cohomological degree $-\dim X$, i.e.\ $\cF\iso \cL[\dim X]$ for some locally constant sheaf $\cL\in\Mod{L_X}$. Hence, the statement follows from Proposition~\ref{prop:GaloisDescentSheaves} and we are done.
	 
	 Now, assume that there exist $X_\alpha^i$ of non-maximal dimension. 
	 Then each of the $X^i_\alpha$ not having maximal dimension is contained in the zero locus of an analytic function that is not identically zero (since $\overline{X^i_\alpha}$ is complex analytic and not equal to the whole space), yielding a finite family of functions $(f_k)_{k\in I}$, $I=\{1,\ldots,m\}$. We can multiply these functions to obtain $f\vcentcolon= f_1\cdot\ldots\cdot f_m$ whose zero locus contains all the $X^i_\alpha$ of non-maximal dimension.
	
	Consider now $Z\vcentcolon= f^{-1}(0)$, $U\vcentcolon= X\setminus Z$ and the inclusion $j\colon U\hookrightarrow X$. By Beilinson's equivalence, the datum of $\cF$ is equivalent to the tuple
	$$(j^{-1}\cF, \VCunf(\cF), u, v)\in \GDXfL.$$
	
	Accordingly, $\cG$ corresponds to a tuple
	$$(j^{-1}\cG,\VCunf(\cG),u_K, v_K)\in \GDXfK$$
	and the morphism induced by $L_X\otimes_{K_X}\cG\to\cF$ via $\Ff$ is nothing but the natural morphism
	$$(L_U\otimes_{K_U}j^{-1}\cG,L_Z\otimes_{K_Z}\VCunf(\cG),\id_{L_Z}\otimes u_K, \id_{L_Z}\otimes v_K)\to (j^{-1}\cF, \VCunf(\cF), u, v)$$
	by Lemma~\ref{lemma:NCVCext}.
	To prove that it is an isomorphism, we need to prove that $L_U\otimes_{K_U}j^{-1}\cG\to j^{-1}\cF$ and $L_Z\otimes_{K_Z}\VCunf(\cG)\to \Phi^{\mathrm{un}}_f(\cF)$ are isomorphisms, where $j^{-1}\cG$ (resp.\ $\VCunf(\cG)$) is the perverse sheaf of invariants of the induced $G$-structure on $j^{-1}\cF$ (resp.\ $\VCunf(\cF)$), since $j^{-1}$ (resp.\ $\VCunf$) is exact and hence commutes with kernels. (Let us note that the unipotent vanishing cycle object $\VCunf(\cF)$ has indeed an induced $G$-structure since, as mentioned in the proof of Lemma~\ref{lemma:NCVCext}, its construction is ``topological'', i.e.\ only uses operations that do not depend on the field and hence are compatible with the $G$-structures involved.)
	
	For the first isomorphism, note that $j^{-1}\cF$ is a complex of sheaves on $U$ whose cohomologies are all locally constant $L_U$-modules of finite rank. Then, with the same arguments as above, $j^{-1}\cF\iso \cL[\dim U]$ for some locally constant sheaf $\cL\in\Mod{L_U}$. Hence, the desired isomorphism follows from Proposition~\ref{prop:GaloisDescentSheaves}.
	
	For the second isomorphism, note that $\VCunf(\cF)$ is a perverse sheaf on the complex analytic variety $Z$ and $\dim Z=\dim X - 1$. Hence, we can apply the same technique (determining, at least locally, a suitable covering of $Z$, choosing a suitable function that vanishes on all the sets of non-maximal dimension and applying Beilinson's equivalence) to this perverse sheaf. We continue this recursively, and the procedure will end if all elements of the covering are of maximal dimension, which will be the case at the latest when $\dim Z=0$, which shows that this inductive procedure terminates.
	This concludes the proof.	
\end{proof}

The statement just proved shows the essential surjectivity part of Galois descent for perverse sheaves. Full faithfulness is inherited from the derived category (Proposition~\ref{prop:ExtDerivedFF}). We have therefore proved the following statement. (We will formulate it in a slightly more general setting than we did in the rest of this work, namely in the context of complex analytic varieties, since this is what we have actually proved.)

\begin{thm}\label{thm:GaloisDescentPerv}
	Let $L/K$ be a finite Galois extension and let $X$ be a complex analytic variety. Then the functor of extension of scalars $\Phi_{L/K}\colon \PervK{X}\to\PervL{X}$ induces an equivalence
	$$\Phi_{L/K}^G\colon \PervK{X}\to\PervL{X}^G.$$
\end{thm}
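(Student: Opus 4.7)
The plan is to split the verification into full faithfulness and essential surjectivity, with the latter being the actual content. Full faithfulness comes essentially for free: $\PervK{X}$ embeds fully faithfully in $\DbK{X}$ (and similarly over $L$), and Proposition~\ref{prop:ExtDerivedFF} already gives full faithfulness of $\Phi_{L/K}^G$ on the whole bounded derived category, so restricting to perverse objects preserves the property. One does need to check that $\Phi_{L/K}^G$ lands in $\PervL{X}^G$, but this is precisely the content of Lemma~\ref{lemma:compatConj}(g) combined with the preservation of perversity under extension of scalars noted earlier.

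For essential surjectivity, given $\cF\in\PervL{X}$ with a $G$-structure $(\varphi_g)_{g\in G}$, the candidate $K$-structure is the one constructed just before the theorem, namely
$$\cG\vcentcolon=\ker\Bigl(\prod_{g\in G}(\varphi_g^K-\id_{\cF^K})\colon\cF^K\to\prod_{g\in G}\cF^K\Bigr)\in\PervK{X},$$
which makes sense because $\PervK{X}$ is abelian. The adjunction of Lemma~\ref{lemma:ScalarExtAdj} produces a canonical, $G$-equivariant comparison map $L_X\otimes_{K_X}\cG\to\cF$, and by the preceding proposition this map is an isomorphism. It remains only to observe that the isomorphism is compatible with $G$-structures, which follows from the invariance description of $\cG$ together with the naturality of the construction.

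The hard part, which is already carried out in the preceding proposition, is showing that the comparison map is an isomorphism. My plan for that is to induct on $\dim X$ via Beilinson's gluing. Locally, choose a holomorphic function $f$ whose vanishing locus $Z$ contains every positive-codimension stratum of a $\C$-analytic stratification adapted to $\cF$; on the complement $U=X\setminus Z$, all cohomology sheaves of $\cF$ are locally constant, and the perversity bounds force $j^{-1}\cF$ to be a shifted local system, so Proposition~\ref{prop:GaloisDescentSheaves} applies directly on $U$. Via Beilinson's equivalence $\Ff$ together with Lemmas~\ref{lemma:NCVCext} and~\ref{lemma:NCVCconj}, the gluing datum of $\cG$ is obtained by taking $G$-invariants componentwise in the gluing datum of $\cF$, so isomorphism of the comparison map reduces to isomorphism of the two pieces $L_U\otimes_{K_U}j^{-1}\cG\to j^{-1}\cF$ (handled above) and $L_Z\otimes_{K_Z}\VCunf(\cG)\to\VCunf(\cF)$, to which the inductive hypothesis applies since $\dim Z<\dim X$.

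The main obstacle is precisely the gluing step: one must know that Beilinson's equivalence intertwines extension of scalars with extension of scalars on gluing data, and that it intertwines the two $G$-conjugations. Both of these are ultimately consequences of the purely topological nature of $\NCunf$, $\VCunf$ and the connecting morphisms $u,v$, combined with the commutation of $\Phi_{L/K}$ with the six operations (Proposition~\ref{prop:extIm}); once these compatibilities are in hand, the induction proceeds smoothly and terminates when one reaches a zero-dimensional stratum, reducing to Galois descent for vector spaces.
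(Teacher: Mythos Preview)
Your proposal is correct and follows essentially the same route as the paper: full faithfulness is inherited from Proposition~\ref{prop:ExtDerivedFF}, and essential surjectivity is proved by constructing the invariant perverse subsheaf $\cG$ and showing the comparison map is an isomorphism via induction on $\dim X$ through Beilinson's gluing, using Lemmas~\ref{lemma:NCVCext} and~\ref{lemma:NCVCconj} exactly as you describe. The only cosmetic difference is that the paper phrases the induction as a recursion that terminates at dimension zero rather than as a formal induction hypothesis, but the content is identical.
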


\vspace{1cm}

\noindent\textsc{Andreas Hohl\\ Université Paris Cité and Sorbonne Université, CNRS, IMJ-PRG, F-75013 Paris, France}\vspace{0.2cm}\\
\textit{Current address:} \textsc{Technische Universität Chemnitz, Fakultät für Mathematik, 09107 Chemnitz, Germany}\\ andreas.hohl@math.tu-chemnitz.de
	
\end{document}